\documentclass[11pt, a4paper, oneside]{amsart}

\usepackage[english]{babel}
\usepackage{amsmath, amsthm, amsfonts, mathrsfs, amssymb}
\usepackage{mathtools}
\mathtoolsset{centercolon}
\usepackage{booktabs}
\usepackage[shortlabels]{enumitem}
\usepackage[colorlinks, citecolor = blue]{hyperref}
\usepackage{fullpage}
\usepackage{comment}
\usepackage{color}
\usepackage{graphicx}
\usepackage{todonotes}

\usepackage{tikz}
\usetikzlibrary{positioning,calc,arrows.meta}
\definecolor{knownblue}{RGB}{49,104,142}
\definecolor{newred}{RGB}{192,57,43}
\definecolor{boxfill}{RGB}{248,249,250}
\definecolor{boxline}{RGB}{90,90,90}

\newcommand{\N}{\ensuremath{\mathbb{N}}}

\newcommand{\R}{\ensuremath{\mathbb{R}}}
\newcommand{\C}{\ensuremath{\mathbb{C}}}

\newcommand{\E}{\ensuremath{\mathbb{E}}}
\renewcommand{\P}{\ensuremath{\mathbb{P}}}

\newcommand{\mc}{\mathcal}

\DeclarePairedDelimiter\abs{\lvert}{\rvert}

\DeclarePairedDelimiter\cbrace\{\}

\DeclarePairedDelimiter{\nrm}\lVert\rVert

\newcommand{\absb}[1]{\bigl|#1\bigr|}
\newcommand{\brb}[1]{\bigl(#1\bigr)}
\newcommand{\cbraceb}[1]{\bigl\{#1\bigr\}}

\newcommand{\absB}[1]{\Bigl|#1\Bigr|}
\newcommand{\brB}[1]{\Bigl(#1\Bigr)}

\DeclareMathOperator{\loc}{loc}

\DeclareMathOperator{\supp}{supp}
\DeclareMathOperator{\ind}{\mathbf{1}}

\newcommand{\avgL}{\textit{\L}}

\makeatletter
\DeclareFontFamily{OMX}{MnSymbolE}{}
\DeclareSymbolFont{MnLargeSymbols}{OMX}{MnSymbolE}{m}{n}
\SetSymbolFont{MnLargeSymbols}{bold}{OMX}{MnSymbolE}{b}{n}
\DeclareFontShape{OMX}{MnSymbolE}{m}{n}{
    <-6>  MnSymbolE5
   <6-7>  MnSymbolE6
   <7-8>  MnSymbolE7
   <8-9>  MnSymbolE8
   <9-10> MnSymbolE9
  <10-12> MnSymbolE10
  <12->   MnSymbolE12
}{}
\DeclareFontShape{OMX}{MnSymbolE}{b}{n}{
    <-6>  MnSymbolE-Bold5
   <6-7>  MnSymbolE-Bold6
   <7-8>  MnSymbolE-Bold7
   <8-9>  MnSymbolE-Bold8
   <9-10> MnSymbolE-Bold9
  <10-12> MnSymbolE-Bold10
  <12->   MnSymbolE-Bold12
}{}

\let\llangle\@undefined
\let\rrangle\@undefined
\DeclareMathDelimiter{\llangle}{\mathopen}%
                     {MnLargeSymbols}{'164}{MnLargeSymbols}{'164}
\DeclareMathDelimiter{\rrangle}{\mathclose}%
                     {MnLargeSymbols}{'171}{MnLargeSymbols}{'171}
\makeatother

\newtheorem{theorem}{Theorem}
\newtheorem{corollary}[theorem]{Corollary}
\newtheorem{lemma}[theorem]{Lemma}
\newtheorem{proposition}[theorem]{Proposition}

\newtheorem{ltheorem}{Theorem}

\newtheorem{lcorollary}[ltheorem]{Corollary}

\theoremstyle{remark}
\newtheorem{remark}[theorem]{Remark}

\newtheorem*{remarkintro}{Remark}

\theoremstyle{definition}
\newtheorem{definition}[theorem]{Definition}

\numberwithin{theorem}{section}
\numberwithin{equation}{section}

\allowdisplaybreaks

\title{Sparse domination implies convex body domination}

\author[Laukkarinen]{Aapo Laukkarinen}
\address{Aapo Laukkarinen \hfill\break\indent
Department of Mathematics and Systems Analysis \hfill\break\indent
Aalto University \hfill\break\indent
P.O. Box 11100 \hfill\break\indent
FI-00076 Aalto, Finland}
\email{aapo.laukkarinen@aalto.fi}

\author[Lorist]{Emiel Lorist}
\address{Emiel Lorist \hfill\break\indent
Delft Institute of Applied Mathematics \hfill\break\indent
Delft University of Technology \hfill\break\indent
P.O. Box 5031 \hfill\break\indent
2600 GA Delft, The Netherlands}
\email{e.lorist@tudelft.nl}

\keywords{Sparse domination, Convex body domination, Bilinear forms, Commutators}
\subjclass[2020]{Primary 42B20; Secondary 46E40.}

\thanks{E.~Lorist was financed by the Dutch Research Council (NWO) on the project ``The sparse revolution for stochastic partial differential equations'' with project number \href{https://doi.org/10.61686/ZGRMR99948}{VI.Veni.242.057}.}

\thanks{A.~Laukkarinen was supported by the Research
Council of Finland through grant 364208.}

\begin{document}
\begin{abstract}
We prove that sparse domination of a bilinear form implies convex body domination. More precisely, if a bilinear form admits an $(r,s)$-sparse bound, then its coordinate-wise extension to $\mathbb C^n$-valued functions admits an $(r,s)$-convex body sparse bound. The proof relies on a randomization argument. We establish the result both for sparse families in a fixed dyadic lattice and for sparse families of arbitrary cubes. As an application, we deduce sparse domination for iterated commutators, with the local oscillations of the symbol appearing in the sparse form.

\end{abstract}

\maketitle

\section{Introduction}
Weighted norm inequalities play a central role in real-variable harmonic analysis. The theory goes back to the foundational work of Muckenhoupt on the Hardy--Littlewood maximal operator \cite{Muc1972} and of Hunt, Muckenhoupt and Wheeden on the Hilbert transform \cite{HMW1973}, which identified the Muckenhoupt $A_p$-classes as the natural scale of weights for these operators. Coifman and Fefferman subsequently proved that every Calder\'on--Zygmund operator is bounded on $L^p(w)$ for all $p\in(1,\infty)$ and $w\in A_p$ \cite{CF1974}. A striking feature of the Muckenhoupt weighted theory is Rubio de Francia's extrapolation theorem \cite{RdF1984}: if an operator $T$ is bounded on $L^{p_0}(w)$ for \emph{some} $p_0\in[1,\infty)$ and  every $w\in A_{p_0}$, then $T$ is bounded on $L^p(w)$ for \emph{every} $p\in(1,\infty)$ and $w\in A_p$.

Once the qualitative weighted theory for most classical operators in harmonic analysis was settled, a central question became quantitative: \begin{quote}
    How does the operator norm depend on the weight characteristic $[w]_{A_p}$?
\end{quote} This question was first answered for the maximal operator by Buckley \cite{Buc1993}, but was particularly relevant for the Beurling--Ahlfors transform, since Astala, Iwaniec and Saksman \cite{AIS2001} had shown that a linear $A_2$-bound had implications for Beltrami equations. Petermichl and Volberg proved this estimate for the Beurling--Ahlfors transform \cite{PV2002}. Afterwards, Petermichl established the sharp dependence on the weight characteristic for the Hilbert and Riesz transforms \cite{Pet2007,Pet2008}. 
For general Calder\'on--Zygmund operators, by a sharp form of Rubio de Francia extrapolation \cite{DGPP2005}, it suffices to study the case $p=2$, for which the sharp bound was called the $A_2$-conjecture. This conjecture was ultimately established by Hyt\"onen \cite{Hyt2012}. Lerner subsequently found an alternative proof using his local mean oscillation decomposition \cite{Ler2013}, estimating the norm of a Calder\'on--Zygmund operator by the norm of a so-called sparse operator. These ideas were then developed by various authors into the domination of a wealth of operators by positive sparse operators or forms. These developments were dubbed the ``sparse revolution'' by Pereyra \cite{Per2019}, to which we refer for a thorough historical account. 

\medskip

Let us briefly recall the terminology. A collection of cubes $\mc S$ is called $\eta$-sparse for $\eta \in (0,1]$ if for every $Q\in\mc S$ there is a measurable set $E_Q\subseteq Q$ such that $\abs{E_Q}\geq\eta\abs{Q}$ and the sets $E_Q$ for $Q \in \mc{S}$ are pairwise disjoint. For an operator $T$, domination by an $(r,s)$-sparse form with $r,s\in[1,\infty)$ means that, for every $f,g\in L^\infty_c(\R^d)$, there exists an $\eta$-sparse collection $\mc S$ satisfying
\begin{equation}
    \absB{\int_{\R^d}{Tf}\,{g}}\lesssim \sum_{Q\in\mc S}|Q|  \nrm{f}_{\avgL^r(Q)}\nrm{g}_{\avgL^s(Q)} , \label{eq:sparseform}
\end{equation}
    where $\|f\|_{\avgL^r(Q)}\coloneqq\brb{\frac{1}{\abs{Q}} \int_Q \abs{f}^r}^{1/r}$ and the implicit constant is independent of $f$ and $g$. The same terminology applies to a general bilinear form $\Lambda\colon L^\infty_c(\R^d) \times L^\infty_c(\R^d) \to \C$, with the left-hand side of \eqref{eq:sparseform} replaced by $\abs{\Lambda(f,g)}$. 

Sparse domination has become a standard route to weighted norm inequalities, often yielding sharp dependence on the weight characteristic. The method separates the operator-specific domination step from the operator-independent weighted estimate for the resulting sparse form. Part of the success of sparse domination can be explained by the following converse observation: Let $T$ be an operator and suppose that, for some $p\in(1,\infty)$ and some increasing function $\phi\colon[1,\infty)\to[1,\infty)$, we have
\begin{equation*}
     \nrm{T}_{L^p(w)\to L^p(w)}\leq \phi\brb{[w]_{A_p}}\qquad w\in A_p.
\end{equation*}
Let $f,g \in L^\infty_c(\R^d)$ and $r,s \in (1,\infty)$.
Then $M_rf,M_{s}g \in A_1$ (see \cite{CR1980}), where $M_r$ denotes the rescaled Hardy--Littlewood maximal operator. Therefore $w:= (M_rf)^{1-p} \cdot M_{s}g \in A_p$ with $[w]_{A_p} \leq c_{d,p} (r')^{p-1}s'$, so we can estimate
    \begin{align}\label{eq:weighted}
        \absB{\int_{\R^d}{Tf}\,{g}}&\leq \nrm{Tf}_{L^p(w)} \nrm{g}_{L^{p'}(w')}\leq \phi(c_{d,p}(r')^{p-1}s')  \nrm{f}_{L^p(w)} \nrm{g}_{L^{p'}(w')},
    \end{align}
where $w' = w^{1-p'}$. Next note that by the Lebesgue differentiation theorem
\begin{align*}
    \nrm{f}_{L^p(w)}&=\brB{\int_{\R^d} \abs{f}^p \cdot (M_rf)^{1-p} \cdot M_{s}g}^{1/p} \leq \brB{\int_{\R^d} \abs{f} \cdot M_{s}g}^{1/p},\\
    \nrm{g}_{L^{p'}(w')}&=\brB{\int_{\R^d} \abs{g}^{p'} \cdot M_rf \cdot (M_{s}g)^{-\frac{1}{p-1}}}^{1/p'}\leq \brB{\int_{\R^d} M_r(f) \cdot  \abs{g}}^{1/p'}.
\end{align*}
Applying the standard sparse domination of the Hardy--Littlewood maximal operator to the two integrals above and plugging the resulting estimates into \eqref{eq:weighted}, we obtain a sparse collection $\mc{S}$ for which \eqref{eq:sparseform} holds.
This shows that, away from the endpoints, weighted norm estimates imply sparse form domination. Combined with the converse implication, this yields an equivalence between the two principles. This equivalence is only qualitative: passing from weighted estimates to sparse domination and back does not preserve the quantitative dependence on the weight characteristic. 

\medskip

One direction in which weighted theory has been extended is matrix-weighted estimates for vector-valued functions. In this setting we extend an operator $T$ (or the bilinear form $\Lambda$) acting on scalar-valued function to act on vector-valued function component-wise. The research on matrix-weighted estimates for singular integrals was initiated by Nazarov, Treil and Volberg \cite{NT1996,TV1997,Vol1997}. In particular they connected the boundedness of the Hilbert transform in the matrix-weighted $L^2$-space to some structural properties of multivariate stationary processes, and to boundedness of some Toeplitz operators. More recently, matrix weights have seen connections with systems of PDEs \cite{IM2019} and elliptic boundary value problems on compact manifolds \cite{BR2025}. 

In the light of the many sharp quantitative results in the scalar-weighted theory, it is natural to ask for the sharp dependence on the weight in a matrix-weighted inequality for Calder\'on-Zygmund operators. The difficulty for this question comes from the fact that matrices map vectors differently based on the direction of the vectors, and the absolute values inside the averages in sparse domination kill this information. Therefore, sparse domination does not seem to directly imply a matrix-weighted estimate. 
This difficulty was solved in \cite{NPTV2017} by Nazarov, Petermichl, Treil and Volberg via a technique that became known as convex body domination. The idea was to replace the averages in \eqref{eq:sparseform} with the convex sets of the form
\[
    \llangle\vec f\rrangle_{\avgL^r(Q)}\coloneqq\left\{\frac{1}{|Q|}\int_Q\vec f\phi\,\colon\,\|\phi\|_{\avgL^{r'}(Q)}\leq 1\right\}.
\]
Then \eqref{eq:sparseform} is replaced by
\[
    \absB{\sum_{j=1}^n\int_{\R^d}{Tf_j}\,{g_j}}\lesssim \sum_{Q\in\mc S}|Q|  \llangle\vec f\rrangle_{\avgL^r(Q)}\cdot\llangle\vec g\rrangle_{\avgL^s(Q)} ,
\]
where the Minkovski dot product in the summand is understood as a non-negative number, see Subsection \ref{Sect:ConvBod} for more details. In contrast to the scalar case, convex body domination yields a $3/2$-power dependence on the $A_2$-matrix weight characteristic, rather than the linear $A_2$ dependence available in the scalar case. Quite surprisingly, this $3/2$ dependence was shown to be sharp for the Hilbert transform in \cite{DPTV2024}. The failure of the linear bound in the matrix case implies that the sharp matrix-weighted Rubio de Francia extrapolation theorem proven in \cite{BC2026} does not determine the sharp dependence for Calder\'on-Zygmund operators for $p\neq 2$. Determining this dependence remains an open problem. As with sparse domination, convex body domination has since been extended to many settings beyond Calder\'on--Zygmund theory, see, for example, \cite{BBDPW2026,CLM2026,DHL2020,KN2024,Lau2025,Lau2026,MR2022}.

It was conjectured in \cite{NPTV2017} that sparse domination implies convex body domination. A partial result was achieved in \cite[Corollary 5.4]{Hyt2024}, where a typical generic estimate in sparse domination proofs was shown to imply convex body domination. This result and the ideas leading up to it have been used in many places, e.g. in \cite{Lau2025}, to prove convex body domination.  In essence, the argument consists of modifying the sparse domination proof so that the stopping time argument accounts for all of the components of the vector-valued input function and using a norm-to-convex-body lifting result that was already present in \cite{NPTV2017} and generalized in \cite{Hyt2024}. In this paper we provide a proof of the full general statement.

\begin{ltheorem}\label{Thm:SparseToConvexBodynondyadicintro}
Let $1\leq r,s<\infty$ with $\frac{1}{r}+\frac{1}{s}>1$. Let $\Lambda\colon L^\infty_c(\R^d) \times L^\infty_c(\R^d) \to \C$ be a bilinear form. Suppose that there is a constant $C_{\mathrm{Sp}}$  such that for all $f,g \in L^\infty_c(\R^d)$ 
 there is an $\eta$-sparse family of cubes $\mathcal{S}$ for which
    \[
        |\Lambda(f,g)|\leq C_{\mathrm{Sp}}\sum_{Q\in\mathcal{S}}|Q|\,\|f\|_{\avgL^r(Q)}\|g\|_{\avgL^s(Q)}.
    \]
    Then, for all $\vec f,\vec g \in L^\infty_c(\R^d,\C^n)$, there exists a $\frac{1}{2\cdot 3^d}$-sparse family of cubes $\mathcal{S}$ for which
    \[
        |\Lambda(\vec f,\vec g)|\lesssim_{d,r,s} n^{1+\frac{1}{r}+\frac{1}{s}}\eta^{-1}C_{\mathrm{Sp}}\sum_{Q\in\mathcal{S}}|Q|\llangle \vec f\rrangle_{\avgL^r(Q)}\cdot\llangle \vec g\rrangle_{\avgL^s(Q)}.
    \] 
\end{ltheorem}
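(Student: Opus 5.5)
The plan is to reduce the vector-valued form to scalar forms by randomization, and then recombine the random sparse families into a single one. Write $\Lambda(\vec f,\vec g)=\sum_{j=1}^n\Lambda(f_j,g_j)$. Let $\varepsilon=(\varepsilon_1,\dots,\varepsilon_n)$ be random with $\E\,\varepsilon_j\overline{\varepsilon_k}=\delta_{jk}$; Steinhaus variables or a vector uniform on $\sqrt n\,S^{2n-1}\subseteq\C^n$ are natural choices. For a deterministic invertible matrix $M$, to be chosen later, set $f_\varepsilon=\varepsilon\cdot M\vec f$ and $g_\varepsilon=\overline{\varepsilon}\cdot M^{-T}\vec g$. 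Bilinearity then gives
\[
\Lambda(\vec f,\vec g)=\E\,\Lambda(f_\varepsilon,g_\varepsilon).
\]
For each $\varepsilon$ the hypothesis supplies an $\eta$-sparse family $\mathcal S_\varepsilon$ (chosen measurably, or by restricting to a finite net of $\varepsilon$'s with an approximation argument) with
\[
|\Lambda(\vec f,\vec g)|\le C_{\mathrm{Sp}}\sum_Q|Q|\,\E\bigl[\ind_{Q\in\mathcal S_\varepsilon}\|f_\varepsilon\|_{\avgL^r(Q)}\|g_\varepsilon\|_{\avgL^s(Q)}\bigr].
\]

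To recombine the families, I will not average them directly. Instead I will note that the weights $c_Q=\P(Q\in\mathcal S_\varepsilon)$ satisfy a Carleson packing condition $\sum_{Q\subseteq R}c_Q|Q|\le\eta^{-1}|R|$, because each $\mathcal S_\varepsilon$ is $\eta$-sparse and hence $\eta^{-1}$-Carleson. For arbitrary cubes, I will first use the three-lattice lemma to place every cube inside a comparable dyadic cube from one of $3^d$ dyadic lattices. Inside each lattice, the usual stopping argument converts a Carleson-weighted sum of monotone-type quantities into a $\tfrac12$-sparse sum with loss $\eta^{-1}$. Here the monotone-type quantities are the convex-body products, which increase under passing to a comparable larger cube. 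Taking the union over the lattices gives the $\frac{1}{2\cdot 3^d}$-sparse family. Before this step, the random quantity must be replaced by a deterministic one per cube, via a pointwise bound.

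For the per-cube estimate, observe that
\[
\|e\cdot\vec h\|_{\avgL^r(Q)}=\sup_{\|\phi\|_{\avgL^{r'}(Q)}\le1}\Bigl|e\cdot\tfrac1{|Q|}\textstyle\int_Q\vec h\phi\Bigr|=\sup_{k\in\llangle\vec h\rrangle_{\avgL^r(Q)}}|e\cdot k|.
\]
By John's theorem, $\llangle\vec f\rrangle_{\avgL^r(Q)}$ is comparable, up to $\sqrt n$, to an ellipsoid $A_Q\mathbf B$, and similarly $\llangle\vec g\rrangle_{\avgL^s(Q)}\approx B_Q\mathbf B$. Since $\mathbf B$ is invariant under unitaries, we get $\llangle\vec f\rrangle\cdot\llangle\vec g\rrangle\approx\|A_Q^*B_Q\|$. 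The random product is comparable to $|A_Q^*M^Te|\,|B_Q^*M^{-1}\bar e|$.

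The main obstacle is that this product of two norms is not bounded by the operator norm $\|A_Q^*B_Q\|$ for an arbitrary $M$, and $M$ cannot depend on $Q$. The first approach I would try is to choose $\vec f,\vec g$ up front to make the problem "balanced." Using the polar decomposition of the top-level reducing matrices, I would split $\vec f,\vec g$ by a stopping-time argument into pieces on which the reducing matrices $A_Q,B_Q$ are comparable, up to powers of $n$, to fixed $A_R,B_R$. I would then take $M$ so that $A_R^*M^T$ and $B_R^*M^{-1}$ share singular vectors with singular values $\sigma_i^{1/2}$ each. With this choice the expectation is
\[
\E|A^*M^Te|\,|B^*M^{-1}\bar e|\le\Bigl(\textstyle\sum\sigma_i\Bigr)\le n\|A^*B\|,
\]
and the remaining powers $n^{1/r+1/s}$ come from comparing $\avgL^r$ norms of coordinates with convex bodies. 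If this stopping approach conflicts with $\Lambda$ being non-local, I would fall back on an averaging over many random matrices $M$ to symmetrize instead.
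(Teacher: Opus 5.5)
There is a genuine gap, at exactly the step you flag yourself. The identity $\Lambda(\vec f,\vec g)=\E\,\Lambda(f_\varepsilon,g_\varepsilon)$ is correct. However, the off-diagonal terms disappear only through the cancellation $\E[\varepsilon_j\overline{\varepsilon_k}]=0$, and that cancellation is destroyed once you apply the scalar sparse bound inside the expectation. No change of basis $M$ that is independent of the cube can compensate for this.

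For instance, let $n=2$, let $Q_1,Q_2$ be unit cubes at distance $D$, and set $\vec f=(\ind_{Q_1},\ind_{Q_2})$, $\vec g=(\ind_{Q_2},\ind_{Q_1})$.
\begin{itemize}
\item Every cube $P$ meeting only one of $Q_1,Q_2$ has $\llangle\vec f\rrangle_{\avgL^r(P)}\cdot\llangle\vec g\rrangle_{\avgL^s(P)}=0$. Hence every convex body sparse form is $\lesssim D^{-d(\frac1r+\frac1s-1)}$.
\item On the other hand, $f_\varepsilon=u_1$, $g_\varepsilon=v_2$ on $Q_1$ and $f_\varepsilon=u_2$, $g_\varepsilon=v_1$ on $Q_2$, where $u=M^{\top}\varepsilon$ and $v=M^{-1}\overline{\varepsilon}$.
\item The hypothesis only asserts that some sparse family exists, so your bound must survive families $\mathcal S_\varepsilon$ containing $Q_1$ and $Q_2$. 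These contribute $\E\bigl(|u_1v_2|+|u_2v_1|\bigr)$.
\item Write $M^{\top}$ with first row $a,b$ and second row $c,d$, and set $w=\varepsilon_2\overline{\varepsilon_1}$. Then $|u_1v_2|=|a^2-b^2w^2|/|ad-bc|$ and $|u_2v_1|=|c^2-d^2w^2|/|ad-bc|$.
\item For Steinhaus $\varepsilon$ these expectations sum to at least $1$ for every invertible $M$, and hence also after any averaging over $M$.
\end{itemize}
So the change of basis must be made cube by cube. Your stopping-time fallback does not explain how to do this for a non-local $\Lambda$. Applying the hypothesis to the components of $(A_Q\ind_Q\vec f,A_Q^{-\top}\ind_Q\vec g)$ sees the small cubes inside the stopping cubes, where these components are not controlled. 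Applying it separately to each cross term $\Lambda(\ind_R f,\ind_{R'}g)$ or $\Lambda(\ind_R f,\ind_{G_Q}g)$ produces a sum over pairs with no sparse structure. Separately, the two John comparisons hidden in your $\approx$ already cost a factor $n$ beyond the claimed $n^{1+\frac1r+\frac1s}$; the paper avoids this with the Auerbach reduction, Lemma~\ref{lem:auerbach}.

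The missing idea is to localize the form first, and to randomize over spatial cut-offs rather than over directions in $\C^n$. For a stopping cube $Q$ with Auerbach matrix $A_Q$, the quantity $\Lambda(\ind_Q\vec f,\ind_Q\vec g)-\sum_{R\in\mathcal F_Q}\Lambda(\ind_R\vec f,\ind_R\vec g)$ is invariant under $(\vec f,\vec g)\mapsto(A_Q\vec f,A_Q^{-\top}\vec g)$. These quantities telescope along the stopping tree to $\Lambda(\vec f,\vec g)$, so the basis may legitimately depend on $Q$.

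To bound this quantity with a single application of the scalar hypothesis, Lemma~\ref{Lemma:KeyRandomization} attaches independent Bernoulli variables $\varepsilon_R$ to the stopping cubes. It uses the nonnegative multipliers $m_\varepsilon=\ind_{G_Q}+2\sum_R\varepsilon_R\ind_R$ and $n_\varepsilon=\ind_{G_Q}+2\sum_R(1-\varepsilon_R)\ind_R$. These satisfy $\E\,\Lambda(m_\varepsilon f,n_\varepsilon g)=\Lambda(f,g)-\sum_R\Lambda(\ind_Rf,\ind_Rg)$, and $m_\varepsilon n_\varepsilon=0$ on every $R$. Consequently, for each realization:
\begin{itemize}
\item sparse cubes inside a stopping cube contribute nothing;
\item the remaining cubes are controlled by the parallel stopping conditions of Lemma~\ref{lemma:multistoptime};
\item the resulting bound is a constant times $\eta^{-1}C_{\mathrm{Sp}}|Q|$ times the $\avgL^r(Q)$ and $\avgL^s(Q)$ norms of the components of $A_Q\vec f$ and $A_Q^{-\top}\vec g$, and it does not depend on the realization.
\end{itemize}
So putting absolute values inside the expectation costs nothing, and no recombination of random sparse families is needed: the stopping tree itself is the $\frac12$-sparse family. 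For arbitrary cubes, the paper runs the same scheme on an asymmetric dyadic form with $\|f\|_{\avgL^r(3Q)}$. There $m_\varepsilon$ is replaced by a product over the boundedly overlapping dilates $3R$ (Lemma~\ref{Lemma:AsymmetricRandomization}).
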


Theorem \ref{Thm:SparseToConvexBodynondyadicintro} follows directly from Theorem \ref{Thm:SparseToConvexBodynondyadic} in the body of the paper. A version for dyadic cubes  with constants independent of the ambient dimension can be found in Theorem \ref{Thm:UniLocSparseToConvexBody}.
The key part of the proof is a randomization of the good part of the bilinear form, which eliminates the contribution of small-scale cubes. See Lemmas \ref{Lemma:KeyRandomization} and \ref{Lemma:AsymmetricRandomization} for the dyadic and non-dyadic versions, respectively. The rest of the argument is essentially already present in \cite{Hyt2024,NPTV2017}.

\begin{remarkintro}
We can combine Theorem \ref{Thm:SparseToConvexBodynondyadicintro} with the following three ingredients:
\begin{itemize}
    \item The aforementioned observation that weighted estimates imply sparse domination;
    \item Matrix-weighted estimates for convex body sparse forms, see \cite{NPTV2017} and \cite[Theorem 1.3]{KNV2024};
    \item Self-improvement of the scalar weights associated with a matrix $A_p$-weight and its dual (see \cite[Lemma 6.3]{Lau2025}).
\end{itemize}
This yields the following consequence: Let $T$ be a linear operator and suppose that there is a $p\in(1,\infty)$ and an  increasing function $\phi\colon[1,\infty)\to[1,\infty)$ such that for every $w\in A_p$ we have
\begin{equation*}
     \nrm{T}_{L^p(w)\to L^p(w)}\leq \phi\brb{[w]_{A_p}}.
\end{equation*} Then, for every $n\in\N$, there exists an increasing function $\psi\colon[1,\infty)\to[1,\infty)$ such that for every $n\times n$ matrix weight $W\in A_p$ we have
\[
    \nrm{T}_{L^p(W,\C^n)\to L^p(W,\C^n)} \leq \psi\brb{[W]_{A_p}},
\]
where $T$ acts componentwise. A detailed proof will be included in a forthcoming version of this paper.
\end{remarkintro}

Another direction in which sparse domination has been developed is the study of commutators. Pointwise sparse domination for first-order commutators of Calder\'on--Zygmund operators was established by Lerner, Ombrosi and Rivera-R\'ios \cite{LORR2017}. The method was subsequently extended to iterated commutators \cite{HLO2020, IFRR2020}, rough singular integral operators \cite{RR2018}, paraproducts \cite{FHF2023,HLS2025}, and a broad class of operators admitting $(r,s)$-sparse form bounds \cite{LLO2024}.
These results are obtained by returning to the scalar sparse domination argument for the underlying operator and repeating or modifying its stopping-time construction to explicitly keep track of the function $b$ from the commutator.

A different mechanism was developed in the context of convex body domination. It was shown that convex body domination of an operator implies a sparse domination of its commutators, see  \cite[Theorem 4]{IPRR2021} and \cite[Theorem 1.8]{IPT2022}. An analogous version for iterated multi-symbol commutators was recently obtained in \cite{IRSR2026}. An abstract formulation for generalized commutators was subsequently obtained in \cite[Lemma 7.1 and Example 7.6]{Hyt2024}; see also \cite[Proposition 7.1]{Lau2025} and \cite[Section 4]{CLM2026}. Combining this with Theorem \ref{Thm:SparseToConvexBodynondyadicintro} yields a direct implication from scalar sparse domination to sparse domination of commutators.

To state the resulting corollary, let $\Lambda\colon L^\infty_c(\R^d)\times L^\infty_c(\R^d)\to\C$ be a bilinear form and let $b\in L^\infty_{\loc}(\R^d)$. Set $\Lambda_b^0\coloneqq\Lambda$ and recursively define
\[
    \Lambda_b^m(f,g) \coloneqq  \Lambda_b^{m-1}(f,bg)-\Lambda_b^{m-1}(bf,g),
    \qquad m\geq1,
\]
or equivalently,
\[
\Lambda_b^m(f,g)=\sum_{k=0}^m(-1)^k\binom{m}{k}\Lambda(b^kf,b^{m-k}g),\qquad m\geq1.
\]
Note that if $\Lambda=\int_{\R^d}Tf\,g$ for a linear operator $T$,
then $\Lambda_b^m(f,g)=\int_{\R^d}T_b^mf\,g,$
where $T_b^m$ is the $m$th-order commutator, i.e. $T_b^0\coloneqq T$ and $T_b^m\coloneqq[b,T_b^{m-1}]$.

\begin{lcorollary}\label{Cor:SparseToCommutatorSparseintro}
Let $1\leq r,s<\infty$ with $\frac{1}{r}+\frac{1}{s}>1$. Let $\Lambda\colon L^\infty_c(\R^d) \times L^\infty_c(\R^d) \to \C$ be a bilinear form. Suppose that there is a constant $C_{\mathrm{Sp}}$  such that for all $f,g \in L^\infty_c(\R^d)$ 
 there is an $\eta$-sparse family of cubes $\mathcal{S}$ for which
    \[
        |\Lambda(f,g)|\leq C_{\mathrm{Sp}}\sum_{Q\in\mathcal{S}}|Q|\,\|f\|_{\avgL^r(Q)}\|g\|_{\avgL^s(Q)}.
    \]
    Then, for every $m\geq1$, $b\in L^\infty_{\loc}(\R^d)$ and $f,g\in L^\infty_c(\R^d)$, there exists a $\frac{1}{2\cdot 3^d}$-sparse family of cubes $\mathcal{S}$ such that, for every choice of constants $(c_Q)_{Q\in\mathcal{S}}\subset\C$,
\begin{align*}
|\Lambda_b^m(f,g)|
&\lesssim_{d,r,s,m}\eta^{-1}C_{\mathrm{Sp}}
\sum_{Q\in\mathcal{S}}|Q|\bigl(
\|(b-c_Q)^mf\|_{\avgL^r(Q)}\|g\|_{\avgL^s(Q)}+\|f\|_{\avgL^r(Q)}\|(b-c_Q)^mg\|_{\avgL^s(Q)}
\bigr).
\end{align*}
\end{lcorollary}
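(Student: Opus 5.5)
We will deduce Corollary \ref{Cor:SparseToCommutatorSparseintro} from Theorem \ref{Thm:SparseToConvexBodynondyadicintro} applied with $n=m+1$. This follows the convex-body-to-commutator mechanism of \cite{IPRR2021,Hyt2024}.

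\textbf{Step 1: vector-valued functions.} Fix $f,g\in L^\infty_c(\R^d)$ and $b\in L^\infty_{\loc}(\R^d)$. Define $\vec f,\vec g\in L^\infty_c(\R^d,\C^{m+1})$ by
\[
\vec f=(b^kf)_{k=0}^m,\qquad \vec g=(b^{k}g)_{k=0}^m.
\]
These are bounded with compact support, since $b$ is locally bounded. Theorem \ref{Thm:SparseToConvexBodynondyadicintro} gives a $\frac{1}{2\cdot3^d}$-sparse family $\mathcal S$ that depends only on $\vec f,\vec g$. For any matrix $A\in\C^{(m+1)\times(m+1)}$, the same family controls the twisted form
\[
\Lambda_A(\vec f,\vec g):=\sum_{j,k}A_{jk}\Lambda(f_k,g_j).
\]
We need this for a single $A$ only, so it suffices to reapply the theorem to the pair $(\vec f,A\vec g)$. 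Instead, I use the fact that the proof of the theorem yields a family adapted to the convex bodies, which are linearly covariant: $\llangle A\vec g\rrangle=A\llangle\vec g\rrangle$.

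\textbf{Step 2: choose $A$.} Take $A=D:=\operatorname{diag}\bigl((-1)^k\binom mk\bigr)$, composed with the reversal $k\mapsto m-k$ on the $g$-side. Then
\[
\Lambda_b^m(f,g)=\sum_k(-1)^k\binom mk\Lambda(b^kf,b^{m-k}g)=\Lambda(\vec f,P\vec g),
\]
where $P$ is a fixed invertible matrix with entries bounded by $2^m$. Applying the theorem to $(\vec f,P\vec g)$ gives
\[
|\Lambda_b^m(f,g)|\lesssim_{d,r,s,m}\eta^{-1}C_{\mathrm{Sp}}\sum_{Q\in\mathcal S}|Q|\,\llangle\vec f\rrangle_{\avgL^r(Q)}\cdot\llangle P\vec g\rrangle_{\avgL^s(Q)}.
\]
The sparse family is now fixed, independently of the constants $c_Q$.

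\textbf{Step 3: insert $c_Q$ cube by cube.} This is the main point. For each $Q$ and each $c=c_Q$, the binomial theorem gives
\[
b^k=\sum_{i\le k}\binom ki c^{k-i}(b-c)^i,
\]
so $\vec f=L_c\vec f_c$ with $\vec f_c:=((b-c)^if)_{i=0}^m$ and $L_c$ lower triangular. Similarly $P\vec g=PL_c\vec g_c$. Hence
\[
\llangle\vec f\rrangle\cdot\llangle P\vec g\rrangle=\llangle\vec f_c\rrangle\cdot\llangle L_c^{T}PL_c\vec g_c\rrangle .
\]
The key algebraic fact is that $L_c^TPL_c=P$. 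This holds because the pairing $\sum_k(-1)^k\binom mk x^ky^{m-k}=(y-x)^m$ is invariant under the shift $x,y\mapsto x-c,y-c$. So the expression equals $\llangle\vec f_c\rrangle\cdot\llangle P\vec g_c\rrangle$, which is bounded by $\lesssim_m\sum_{i+j=m}\|(b-c)^if\|_{\avgL^r(Q)}\|(b-c)^jg\|_{\avgL^s(Q)}$. Using $|x^ky^{m-k}|$-type reasoning, this step needs care. The Minkowski dot product is bounded by $\sum_{i,j}P_{ij}$ times the products of norm averages, and $P$ only pairs indices with $i+j=m$.

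\textbf{Step 4: remove mixed terms.} Finally we bound the mixed terms with $0<i<m$. For these we use H\"older in the form $|(b-c)^if|=|(b-c)^mf|^{i/m}|f|^{1-i/m}$, which gives
\[
\|(b-c)^if\|_{\avgL^r(Q)}\le\|(b-c)^mf\|_{\avgL^r(Q)}^{i/m}\|f\|_{\avgL^r(Q)}^{1-i/m},
\]
and similarly for $g$ with exponent $j/m$. Multiplying, applying Young's inequality $x^{\theta}y^{1-\theta}\le x+y$ to the resulting product of two geometric means, and using $i/m+j/m=1$, the mixed terms are bounded by the two endpoint terms. This yields the stated estimate.

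The hardest step is Step 3: bounding the Minkowski product of non-symmetric convex bodies correctly. It must be checked via the definition in Subsection \ref{Sect:ConvBod}, as the supremum over $\phi,\psi$ of $|\langle\vec f\phi\rangle_Q\cdot P\langle\vec g\psi\rangle_Q|$, which reduces to the scalar pairs.
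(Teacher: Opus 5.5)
Your proof is correct, and its skeleton matches the paper's. You apply the convex body theorem with $n=m+1$ to $\vec f=(b^kf)_k$ and $P\vec g=((-1)^k\binom mk b^{m-k}g)_k$, whose extended form is exactly $\Lambda_b^m(f,g)$. The paper puts the binomial weights on the $f$-side instead, which is immaterial. In both arguments the sparse family is fixed before the constants $c_Q$ are chosen.

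The difference lies in how you bound the Minkowski product. The paper uses $\vec a_m(x)\cdot\vec b_m(y)=(b(x)-b(y))^m$ to write the product as a supremum of double averages $\frac{1}{|Q|^2}\int_Q\int_Q (b(x)-b(y))^m f(y)\phi(y)g(x)\psi(x)\,dx\,dy$. A single pointwise inequality $|b(x)-b(y)|^m\le 2^{m-1}(|b(x)-c_Q|^m+|b(y)-c_Q|^m)$, together with H\"older, then produces the two endpoint terms directly.

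You instead proceed in four steps:
\begin{enumerate}
\item You encode the shift invariance of $(y-x)^m$ as the matrix identity $L_c^{\top}PL_c=P$, which is valid because the moment vectors $(X^i)_{i=0}^m$ span $\C^{m+1}$.
\item You use $\llangle A\vec x\rrangle=A\llangle\vec x\rrangle$ to pass from $b$ to $b-c_Q$.
\item You bound coordinatewise by $\sum_k\binom mk\|(b-c_Q)^kf\|_{\avgL^r(Q)}\|(b-c_Q)^{m-k}g\|_{\avgL^s(Q)}$.
\item You absorb the mixed terms by H\"older interpolation and Young's inequality.
\end{enumerate}
This is longer, but it records the intermediate estimate with all mixed terms. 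If you use weighted AM--GM in the last step, it even recovers the same constant $2^{m-1}$.

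One cleanup is needed in Step 1. You claim that a single sparse family controls $\Lambda_A(\vec f,\vec g)$ for every matrix $A$, and that the proof yields a family ``adapted to the convex bodies.'' Neither follows from the theorem: the stopping collections in its proof depend on the coordinate functions of the transformed $\vec g$, and these change under $\vec g\mapsto A\vec g$. Since Step 2 simply reapplies the theorem to $(\vec f,P\vec g)$, these claims are unused and should be removed.
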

Corollary \ref{Cor:SparseToCommutatorSparseintro} follows from Theorem \ref{Thm:SparseToConvexBodynondyadicintro} by the same argument used to deduce Corollary \ref{Cor:SparseToCommutatorSparse} from Theorem \ref{Thm:SparseToConvexBody} in the body of the paper. Combining this corollary with the earlier observation that weighted bounds and sparse domination are equivalent, one finds that Muckenhoupt weighted estimates for an operator $T$ imply Bloom weighted estimates for the commutators $[b,T]$ with $b \in L^\infty_{\loc}(\R^d)$. We leave the precise statement for future work.

\medskip

This paper is organized as follows. In Section~\ref{sec:dyadic}, we treat sparse domination relative to a fixed dyadic lattice. We first introduce the required convex body formalism and then combine a stopping-time construction with the randomization lemma at the heart of the proof. This yields the dyadic version of our main theorem and, as a consequence, sparse bounds for iterated commutators. In the final section, we reduce sparse forms over arbitrary cubes to asymmetric dyadic sparse forms, extend the stopping-time and randomization arguments, and deduce Theorem \ref{Thm:SparseToConvexBodynondyadicintro}.

\section{Dyadic sparse domination}\label{sec:dyadic}
To isolate the main idea, we first prove that sparse domination with respect to a fixed dyadic lattice implies convex body domination with respect to the same lattice. In the next section, we extend this result to sparse forms of  arbitrary cubes.

\subsection{General convex bodies and the extension of bilinear forms}\label{Sect:ConvBod}
Let $X$ be a complex Banach space. For a vector $\vec{x} \in X^n$ we define the convex bodies $\llangle \vec x\rrangle_X$ by
\[
    \llangle \vec x\rrangle_X\coloneqq \{\langle \vec x,x^*\rangle\,\colon\, x^*\in \Bar B_{X^*}\}\subset \C^n,
\]
where $X^*$ is the dual space of $X$ and $\langle \vec x,x^*\rangle\coloneqq(\langle x_i,x^*\rangle)_{i=1}^n$. It is clear that $ \llangle \vec x\rrangle_X$ is absolutely convex and bounded. Furthermore, it is closed and hence compact, see \cite[Lemma 2.3]{Hyt2024}.
We denote
\[
    \llangle \vec x\rrangle_X\cdot\llangle \vec y\rrangle_Y\coloneqq \sup\bigl\{|\vec a \cdot\vec b|\,\colon\,\vec a\in\llangle \vec x\rrangle_X, \vec b\in \llangle \vec y\rrangle_Y\bigr\},
\]
where we use the bilinear dot product $\vec a \cdot \vec b = \sum_{j=1}^n a_jb_j$. 

We will use the following convex body lifting, which has a sharper  dependence on $n$ compared to the more widely used John ellipsoid approach, cf. \cite[Lemma 4.1]{Hyt2024}.

\begin{lemma}[Auerbach reduction]\label{lem:auerbach}
Let \(X\) and \(Y\) be Banach spaces and let \(( e_j)_{j=1}^n\) be the standard basis of
\(\mathbb C^n\).
For every \(\vec x\in X^n\) there exists an invertible linear transformation $A_{\vec{x}} \colon \C^n\to \C^n$ such that for all \(\vec y\in Y^n\), defining 
\[x_j':=A_{\vec{x}}\vec x\cdot e_j,
\qquad
y_j':=A^{-\top}_{\vec{x}}\vec y\cdot e_j,
\qquad j=1,\ldots,n,\]
we have 
\[
\sum_{j=1}^n\|x_j'\|_X\|y_j'\|_Y\leq
n\,\llangle \vec x\rrangle_X\cdot\llangle \vec y\rrangle_Y.\]
\end{lemma}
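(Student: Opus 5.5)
The plan is to take $A_{\vec x}$ to be the change of basis to an \emph{Auerbach basis} of the convex body $K\coloneqq\llangle \vec x\rrangle_X$, and then compute the norms of $x_j'$ and $y_j'$ directly as suprema over convex bodies.

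\emph{Norms as suprema.} First I rewrite the norms. For a fixed matrix $B$ and each $j$, Hahn--Banach gives
\[
\|e_j\cdot B\vec x\|_X=\sup_{x^*\in \bar B_{X^*}}\bigl|e_j\cdot B\langle \vec x,x^*\rangle\bigr| =\sup_{a\in K}|e_j\cdot Ba| .
\]
The analogous identity holds for $\vec y$ with $L\coloneqq\llangle\vec y\rrangle_Y$.

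\emph{Construction of the basis.} Let $V=\operatorname{span}K$ and $k=\dim V$. If $k=0$, then $\vec x=0$ and any $A_{\vec x}$ works, so assume $k\geq1$. Fix a linear isomorphism $V\cong\C^k$, giving a determinant on $V^k$. By compactness of $K$ (\cite[Lemma 2.3]{Hyt2024}), choose $a_1,\dots,a_k\in K$ maximizing $|\det(a_1,\dots,a_k)|$. The maximum is positive since $K$ spans $V$, so $a_1,\dots,a_k$ is a basis of $V$.

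\emph{Coefficients of points of $K$.} For $b\in K$, write $b=\sum_{i\le k}c_ia_i$. By Cramer's rule, $c_i$ equals $\det$ with $a_i$ replaced by $b$, divided by $\det(a_1,\dots,a_k)$. By maximality, $|c_i|\le1$.

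\emph{Definition of $A_{\vec x}$.} Complete to a basis $a_1,\dots,a_n$ of $\C^n$ with arbitrary $a_{k+1},\dots,a_n$. Let $A_{\vec x}$ be the invertible map with $A_{\vec x}a_i=e_i$, i.e.\ $A_{\vec x}^{-1}$ is the matrix with columns $a_i$.

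\emph{Bound for $x_j'$.} For $a\in K\subset V$ we have $A_{\vec x}a=\sum_{i\le k}c_ie_i$ with $|c_i|\le1$. Hence $\|x_j'\|_X=\sup_{a\in K}|e_j\cdot A_{\vec x}a|$ is at most $1$ for $j\le k$ and equals $0$ for $j>k$.

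\emph{Bound for $y_j'$ and conclusion.} Since $e_j\cdot A_{\vec x}^{-\top}b=A_{\vec x}^{-1}e_j\cdot b=a_j\cdot b$, we get
\[
\|y_j'\|_Y=\sup_{b\in L}|a_j\cdot b|\le \llangle\vec x\rrangle_X\cdot\llangle\vec y\rrangle_Y
\]
for $j\le k$, because $a_j\in K$. Summing over $j\le k$, and noting that the terms with $j>k$ vanish, yields the bound with the factor $k\le n$.

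The only delicate point is the degenerate case, where $K$ does not span $\C^n$. There the determinant must be taken relative to $V$, and the added basis vectors $a_{k+1},\dots,a_n$ must not enter the estimate; they do not, because the corresponding $x_j'$ are zero. Everything else is Cramer's rule combined with the duality identity for the norms.
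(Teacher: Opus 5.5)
Your proof is correct and follows the paper's argument. You define $A_{\vec x}$ to send an Auerbach basis of $\llangle \vec x\rrangle_X$, completed to a basis of $\C^n$, to the standard basis; this gives $\|x_j'\|_X\le 1$ for $j\le k$, $x_j'=0$ for $j>k$, and $\|y_j'\|_Y\le \llangle\vec x\rrangle_X\cdot\llangle\vec y\rrangle_Y$ because the basis vectors lie in $\llangle \vec x\rrangle_X$. The only difference is that you build the Auerbach basis yourself by maximizing the determinant and applying Cramer's rule, whereas the paper cites the Auerbach lemma for the span of $\llangle \vec x\rrangle_X$ normed by its Minkowski functional.
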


\begin{proof}
Set $U:=\operatorname{span} \, \llangle \vec x\rrangle_X$, which we norm using the Minkowski functional of $\llangle \vec x\rrangle_X$, i.e.
$$
\nrm{u}_U := \inf\cbrace{\lambda >0:\lambda^{-1} u \in \llangle \vec x\rrangle_X}.
$$
Then $\llangle \vec x\rrangle_X$ is the closed unit ball in $U$. Denoting $m:=\dim U\leq n$ we know that by the Auerbach lemma (see, e.g., \cite[Lemma II.E.11]{Woj91}) there exists a biorthogonal basis $(u_j,u_j^*)_{j=1}^m$ in $U \times U^*$ with $\nrm{u_j}_U= \nrm{u_j^*}_{U^*} =1$ for $j=1,\ldots,m$. Extend \((u_j)_{j=1}^m\) to a basis \((u_j)_{j=1}^n\) of \(\mathbb C^n\), and define the invertible transformation \(A_{\vec x} \colon \C^n \to \C^n\) by
\[
A_{\vec x}u_j:= e_j,\qquad j=1,\ldots,n.
\]
For \(j\leq m\), we have
\[\|x_j'\|_X=\sup_{\nrm{x^*}_{X^*}\leq 1}\absb{\brb{A_{\vec x}\langle \vec x,x^*\rangle}   \cdot e_j}
=\sup_{\nrm{x^*}_{X^*}\leq 1}\absb{u_j^*\brb{\langle \vec x,x^*\rangle}}\leq1.
\]
On the other hand, we have
$y_j'=A_{\vec x}^{-\top}\vec y\cdot e_j=\vec y\cdot A_{\vec x}^{-1}  e_j=\vec y\cdot u_j.$ and thus, since \(u_j\in\llangle\vec x\rrangle_X\), we obtain
\[
\|y_j'\|_Y=\sup_{\nrm{y^*}_{Y^*}\leq1}\absb{u_j\cdot\langle\vec y,y^*\rangle}\leq
\llangle\vec x\rrangle_X\cdot\llangle\vec y\rrangle_Y.
\]
Moreover, since \(\llangle \vec x\rrangle_X\subset U\), all coordinates of
\(\vec{x}'=A_{\vec x}\vec{x}\) with index \(j>m\) vanish. Therefore
\[
\sum_{j=1}^n\|x_j'\|_X\|y_j'\|_Y=\sum_{j=1}^m\|x_j'\|_X\|y_j'\|_Y\leq m\,\llangle\vec x\rrangle_X \cdot\llangle\vec y\rrangle_Y\leq n\,\llangle\vec x\rrangle_X\cdot\llangle\vec y\rrangle_Y,
\]
which proves the result.
\end{proof}

We extend a bilinear form $\Lambda\colon X\times Y\to \C$ to a form $X^n\times Y^n \to \C$ by 
\[
    \Lambda(\vec x,\vec y)\coloneqq \sum_{j=1}^n\Lambda(\vec x\cdot e_j,\vec y\cdot e_j),
\]
where $(e_j)_{j=1}^n$ is the standard basis of $\C^n$. It is simple to check that for any invertible linear transformation $A \colon \C^n \to \C^n$ we have \[
    \Lambda(A\vec x, \vec y)=\Lambda(\vec x,A^{\top}\vec y),
\]
and therefore $\Lambda(A\vec x,A^{-\top}\vec y)=\Lambda(\vec x,\vec y).$

\begin{remark}\label{rem:sesq}
    If the Banach space $Y$ admits a natural conjugation, the preceding bilinear extension can be related to the usual, basis-independent, extension of a sesquilinear form. Indeed,  $\Lambda_{\mathrm{ses}}\colon X\times Y\to\C$ given by
    \[
        \Lambda_{\mathrm{ses}}(x,y)\coloneqq\Lambda(x,\overline y),\qquad x\in X,\ y\in Y,
    \]
    is sesquilinear, and its usual extension is
    \[
        \Lambda_{\mathrm{ses}}(\vec x,\vec z)\coloneqq\sum_{j=1}^n\Lambda_{\mathrm{ses}} \bigl((\vec x,v_j)_{\C^n},(\vec z,v_j)_{\C^n}\bigr),
    \]
    where $(v_j)_{j=1}^n$ is any orthonormal basis of $\C^n$. This extension is independent of the choice of orthonormal basis. Taking the standard basis, which is real, yields $\Lambda(\vec x,\vec y)=\Lambda_{\mathrm{ses}}(\vec x,{\overline{\vec y}}).$ Thus our bilinear extension is invariant under changes of real orthonormal basis, but in general not under arbitrary changes of orthonormal basis.
\end{remark}

\subsection{Stopping time argument}
By a cube we mean a cube whose sides are parallel to the coordinate axes. 
Given a cube $Q$, we denote its side length by $\ell(Q)$ and let $\mathcal{D}_k(Q)$ be the collection of cubes obtained by subdividing $Q$ into $2^{kd}$ congruent subcubes with disjoint interiors. Let $\mathcal{D}(Q)\coloneqq\bigcup_{k=0}^\infty\mathcal{D}_k(Q)$. 

For a cube $Q$ and $1\leq p<\infty$, we equip the local normalized Lebesgue space $\avgL^p(Q)$ with the norm
\[
    \|f\|_{\avgL^p(Q)}\coloneqq\brB{\frac{1}{\abs{Q}} \int_Q \abs{f}^p}^{1/p}.
\]
The localized dyadic $p$-maximal operator is defined by
\[
M_p^{\mathcal D(Q)}f(x)\coloneqq\sup_{\substack{R\in\mathcal D(Q):x\in R}}\|f\|_{\avgL^p(R)},\qquad x\in Q,
\]
which satisfies the weak-type estimate
\[
\sup_{\lambda>0}\lambda \absb{\bigl\{x\in Q:M_p^{\mathcal D(Q)}f(x)>\lambda\bigr\}}^{1/p}
\leq\|f\|_{L^p(Q)}.
\]

In the following lemma we record the standard parallel stopping-time argument. We provide a proof for completeness.
\begin{lemma}\label{lemma:multistoptime}
    Let $1\leq r,s<\infty$ and let $Q$ be a cube. Let $f_j\in \avgL^r(Q)$ and $g_j\in \avgL^s(Q)$ for $j=1,\dots, n$. Then there exists a disjoint collection $\mathcal{F}_Q\subset \mathcal{D}(Q)$ such that
    \begin{equation*}
        \sum_{R\in\mathcal{F}_Q}|R|\leq \tfrac{1}{2}|Q|,
    \end{equation*}
    and if $P\in\mathcal{D}(Q)$ is not contained in any $R\in \mathcal{F}_Q$, then for $j=1,\ldots,n$
    \begin{equation*}
        \|f_j\|_{\avgL^r(P)}\leq (4n)^\frac{1}{r}\|f_j\|_{\avgL^r(Q)}, \qquad \|g_j\|_{\avgL^s(P)}\leq (4n)^\frac{1}{s}\|g_j\|_{\avgL^s(Q)} .
    \end{equation*}
\end{lemma}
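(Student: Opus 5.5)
We define $\mathcal{F}_Q$ as the maximal cubes $R\in\mathcal{D}(Q)$ on which some component average is too large. That is, $R$ is a maximal cube with
\[
\|f_j\|_{\avgL^r(R)}> (4n)^{1/r}\|f_j\|_{\avgL^r(Q)} \quad\text{or}\quad \|g_j\|_{\avgL^s(R)}> (4n)^{1/s}\|g_j\|_{\avgL^s(Q)}
\]
for some $j\in\{1,\ldots,n\}$. Components with $\|f_j\|_{\avgL^r(Q)}=0$ never trigger stopping: then $f_j=0$ a.e.\ on $Q$, so all its averages over subcubes vanish. The same holds for $g_j$.

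\emph{Disjointness.} Maximality in the dyadic tree $\mathcal{D}(Q)$ makes $\mathcal{F}_Q$ disjoint. Maximal elements exist, since every chain of stopping cubes is bounded above by $Q$ itself.

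\emph{Control off the stopping cubes.} Let $P\in\mathcal{D}(Q)$ be contained in no $R\in\mathcal{F}_Q$. Then $P$ is not a stopping cube: if it were, it would lie in a maximal one. Hence both inequalities of the lemma hold on $P$ for every $j$.

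\emph{Measure bound.} Every $R\in\mathcal{F}_Q$ is contained in the union of the level sets
\[
E_j^f:=\{M_r^{\mathcal D(Q)}f_j>(4n)^{1/r}\|f_j\|_{\avgL^r(Q)}\}
\]
and the analogous sets $E_j^g$. By the weak-type estimate with $\lambda=(4n)^{1/r}\|f_j\|_{\avgL^r(Q)}$,
\[
|E_j^f|\le \frac{\|f_j\|_{L^r(Q)}^r}{\lambda^r}=\frac{|Q|\,\|f_j\|_{\avgL^r(Q)}^r}{4n\,\|f_j\|_{\avgL^r(Q)}^r}=\frac{|Q|}{4n}.
\]
Likewise $|E_j^g|\le |Q|/(4n)$. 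By disjointness and the union bound over $2n$ sets,
\[
\sum_{R\in\mathcal{F}_Q}|R|\le \sum_{j=1}^n\bigl(|E_j^f|+|E_j^g|\bigr)\le 2n\cdot\frac{|Q|}{4n}=\tfrac12|Q|.
\]

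The argument has no real obstacle. The only care needed is with the degenerate zero components, which never trigger stopping, and with using the dyadic maximal function localized to $\mathcal{D}(Q)$, so that the weak-type estimate applies with constant $1$.
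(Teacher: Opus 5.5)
Your proof is correct and follows the paper's argument essentially verbatim. Both use the same maximal stopping cubes in $\mathcal D(Q)$, the same containment of $\bigcup_{R\in\mathcal F_Q}R$ in the $2n$ level sets of the localized dyadic maximal functions, and the same weak-type bound $|Q|/(4n)$ for each set; your explicit handling of vanishing components and of the existence of maximal cubes is a small detail the paper leaves implicit.
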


\begin{proof}
    Let $\mathcal{F}_Q$ be dyadic maximal subcubes of $Q$ that satisfy 
    \[
        \|f_j\|_{\avgL^r(R)}> (4n)^\frac{1}{r}\|f_j\|_{\avgL^r(Q)} \qquad \text{or}\qquad \|g_j\|_{\avgL^s(R)}> (4n)^\frac{1}{s}\|g_j\|_{\avgL^s(Q)}
    \]
    for some $1\leq j\leq n$. Then we have
    \begin{equation}\label{Eq:CubesToLevelSets}
        \sum_{R\in\mathcal{F}_Q}|R|\leq \sum_{j=1}^n|E_j|+\sum_{j=1}^n|\Tilde{E}_j|,
    \end{equation}
    where
    \begin{align*}
            E_j&\coloneqq \{M^{\mathcal D(Q)}_{r}f_j>(4n)^\frac{1}{r}\|f_j\|_{\avgL^r(Q)}\}, \\ \widetilde{E}_j&\coloneqq \{M^{\mathcal D(Q)}_{s}g_j>(4n)^\frac{1}{s}\|g_j\|_{\avgL^s(Q)}\}.
    \end{align*}
By the weak boundedness of the dyadic maximal function we have \[\max\cbraceb{|E_j|,|\widetilde{E}_j}\leq \tfrac{1}{4n}\abs{Q}, \qquad 1\leq j\leq n.\]  Plugging this into \eqref{Eq:CubesToLevelSets} yields the first assertion. The second assertion follows directly from maximality.
\end{proof}

\subsection{Randomization of the good part of the bilinear form}
The following  lemma provides the key ingredient that was previously missing from the proof of the general statement that sparse domination implies convex body domination. 
\begin{lemma}\label{Lemma:KeyRandomization}
    Let $p \in (1,\infty)$ and let $\Lambda\colon L^p(\R^d) \times L^{p'}(\R^d) \to \C$ be a continuous bilinear form. Fix a cube $Q$, let $\mathcal{F}_Q\subset \mathcal{D}(Q)$ be a collection of disjoint cubes and let $\{\varepsilon_R\}_{R\in \mathcal{F}_Q}$ be independent Bernoulli random variables with $\P(\cbrace{\varepsilon_R=0})= \P(\cbrace{\varepsilon_R=1})=\frac12$. Set $G_Q\coloneqq Q\setminus\bigcup_{R\in\mathcal{F}_Q}R$ and
    \[
        m_\varepsilon:=\ind_{G_Q}+2\sum_{R\in\mathcal{F}_Q}\ind_R\varepsilon_R,\qquad n_\varepsilon:=\ind_{G_Q}+2\sum_{R\in\mathcal{F}_Q}\ind_R(1-\varepsilon_R).
    \]
    Then for $f \in L^p(Q)$ and $g \in L^{p'}(Q)$ we have
    \begin{equation}\label{Eq:randomize}
        \E \left[\Lambda(m_\varepsilon f,n_\varepsilon g)\right]=\Lambda(f,g)- \sum_{R\in\mathcal{F}_Q}\Lambda(\ind_Rf,\ind_Rg).
    \end{equation}
    Moreover, for every $R\in\mathcal{F}_Q$, either $m_\varepsilon$ or $n_\varepsilon$ vanishes on $R$.
\end{lemma}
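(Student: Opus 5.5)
The plan is to expand $m_\varepsilon f$ and $n_\varepsilon g$ into pieces supported on $G_Q$ and on the individual cubes $R\in\mathcal{F}_Q$, use bilinearity and continuity of $\Lambda$ to pass the expectation inside, and compute the expectation term by term. First I will dispose of the support claim. The cubes in $\mathcal{F}_Q$ are pairwise disjoint and $G_Q$ is disjoint from each of them. Hence on a fixed $R$ we have $m_\varepsilon=2\varepsilon_R$ and $n_\varepsilon=2(1-\varepsilon_R)$. Since $\varepsilon_R\in\{0,1\}$, exactly one of these vanishes on $R$.

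For the identity, write $f=\ind_{G_Q}f+\sum_{R}\ind_Rf$ and $g=\ind_{G_Q}g+\sum_{R'}\ind_{R'}g$. These series converge in $L^p$ and $L^{p'}$ respectively, by dominated convergence, because the cubes are disjoint and $f\in L^p(Q)$, $g\in L^{p'}(Q)$. In the same way,
\[
m_\varepsilon f=\ind_{G_Q}f+2\sum_R\varepsilon_R\ind_Rf,\qquad n_\varepsilon g=\ind_{G_Q}g+2\sum_{R'}(1-\varepsilon_{R'})\ind_{R'}g,
\]
with convergence for every realization, dominated by $2|f|$ and $2|g|$. Continuity and bilinearity of $\Lambda$ then give, for each $\varepsilon$, a convergent double-series expansion of $\Lambda(m_\varepsilon f,n_\varepsilon g)$. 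It has four kinds of terms: $\Lambda(\ind_{G_Q}f,\ind_{G_Q}g)$; cross terms $2(1-\varepsilon_{R'})\Lambda(\ind_{G_Q}f,\ind_{R'}g)$ and $2\varepsilon_R\Lambda(\ind_Rf,\ind_{G_Q}g)$; and terms $4\varepsilon_R(1-\varepsilon_{R'})\Lambda(\ind_Rf,\ind_{R'}g)$.

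Next I take expectations. We have $\E\varepsilon_R=\E(1-\varepsilon_R)=\tfrac12$, so the cross terms with $G_Q$ get coefficient $1$. For $R\neq R'$, independence gives $\E[4\varepsilon_R(1-\varepsilon_{R'})]=1$. For $R=R'$, we have $\varepsilon_R(1-\varepsilon_R)=0$ identically, so the coefficient is $0$. Summing gives exactly the full expansion of $\Lambda(f,g)$ with the diagonal terms $\Lambda(\ind_Rf,\ind_Rg)$ removed. The removed diagonal series converges as well: it equals the expectation of a term that vanishes identically, minus $\Lambda(f,g)$, and its convergence can also be seen directly as below. This yields \eqref{Eq:randomize}.

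The main obstacle is justifying the interchange of the expectation with the infinite sums when $\mathcal{F}_Q$ is infinite. I would avoid it by truncation. Enumerate $\mathcal{F}_Q=\{R_1,R_2,\dots\}$ and let $F_N=\bigcup_{k>N}R_k$. Define
\[
m^N_\varepsilon=\ind_{G_Q\cup F_N}+2\sum_{k\le N}\varepsilon_{R_k}\ind_{R_k},
\]
and $n^N_\varepsilon$ analogously; these depend on only finitely many $\varepsilon$'s. The finite computation above, applied with $G_Q$ replaced by $G_Q\cup F_N$, gives
\[
\E\,\Lambda(m^N_\varepsilon f,n^N_\varepsilon g)=\Lambda(f,g)-\sum_{k\le N}\Lambda(\ind_{R_k}f,\ind_{R_k}g).
\]
Now let $N\to\infty$. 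We have $\|\ind_{F_N}f\|_{L^p}\to0$ and $\|\ind_{F_N}g\|_{L^{p'}}\to0$. Hence $|\Lambda(m^N_\varepsilon f,n^N_\varepsilon g)-\Lambda(m_\varepsilon f,n_\varepsilon g)|\lesssim\|\Lambda\|\bigl(\|\ind_{F_N}f\|_p\|g\|_{p'}+\|f\|_p\|\ind_{F_N}g\|_{p'}\bigr)\to0$ uniformly in $\varepsilon$. This lets us pass to the limit on the left-hand side, and it also shows that the diagonal series on the right converges. Measurability of $\varepsilon\mapsto\Lambda(m_\varepsilon f,n_\varepsilon g)$ follows because it is a uniform limit of functions of finitely many coordinates.
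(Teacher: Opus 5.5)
Your proposal is correct and follows the paper's proof. Like the paper, you expand $\Lambda(m_\varepsilon f,n_\varepsilon g)$ by bilinearity and continuity into the $G_Q$--$G_Q$ term, the cross terms and the $R$--$R'$ terms, note that every coefficient has expectation $1$ except the diagonal ones where $4\varepsilon_R(1-\varepsilon_R)=0$, and read off the support claim from the values $2\varepsilon_R$ and $2(1-\varepsilon_R)$ on each $R$. Your truncation via $m^N_\varepsilon,n^N_\varepsilon$, with the uniform-in-$\varepsilon$ bound coming from $|m^N_\varepsilon-m_\varepsilon|\leq\ind_{F_N}$, is a clean justification of exchanging the expectation with the infinite sums, which the paper covers only implicitly by ``continuity and bilinearity''.
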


\begin{proof} 
    Since $\Lambda$ is continuous and bilinear, we have
    \begin{align*}          \Lambda(m_\varepsilon f,n_\varepsilon g) =\Lambda(\ind_{G_Q}f,\ind_{G_Q}g)&+2\sum_{R\in\mathcal{F}_Q}\varepsilon_R \Lambda(\ind_{R}f,\ind_{G_Q}g) \\&+ 2\sum_{R'\in\mathcal{F}_Q}(1-\varepsilon_{R'}) \Lambda(\ind_{G_Q}f,\ind_{R'}g)\\&+4\sum_{R\in\mathcal{F}_Q}\sum_{R'\in\mathcal{F}_Q}\varepsilon_R(1-\varepsilon_{R'}) \Lambda(\ind_{R}f,\ind_{R'}g).
    \end{align*}
    We note that $4\varepsilon_R(1-\varepsilon_{R})=0$, and all of the other coefficients have expectation $1$. Therefore, 
    \begin{align*}
        \E \left[\Lambda(m_\varepsilon f,n_\varepsilon g)\right]=\Lambda(\ind_{G_Q}f,\ind_{G_Q}g)+\sum_{R\in\mathcal{F}_Q}\Lambda(\ind_{R}f,\ind_{G_Q}g) &+ \sum_{R'\in\mathcal{F}_Q} \Lambda(\ind_{G_Q}f,\ind_{R'}g)\\&+\sum_{\substack{R,R'\in\mathcal{F}_Q\\R\neq R'}}\Lambda(\ind_{R}f,\ind_{R'}g).
    \end{align*}
    By continuity and bilinearity, the last expression is equal to $\Lambda(f,g)- \sum_{R\in\mathcal{F}_Q}\Lambda(\ind_Rf,\ind_Rg)$. 
    The final assertion follows from the facts that if $x\in R\in\mathcal{F}_Q$, then $$m_\varepsilon(x)n_\varepsilon(x)=4(1-\varepsilon_R)\varepsilon_R=0$$
    and that $m_\varepsilon$ and $n_\varepsilon$ are constant on $R$.
\end{proof}

\subsection{Proof of the main dyadic result}
To pass from the preceding local arguments to a global result, we use the following notion of a dyadic lattice from \cite{LN2019}.

\begin{definition}\label{Def:DyadicLattice}
A collection of cubes $\mathcal D$ is called a dyadic lattice if it satisfies the following properties:
\begin{enumerate}
    \item If $Q\in\mathcal D$ and $Q'\in\mathcal D(Q)$, then $Q'\in\mathcal D$.
    \item If $Q',Q''\in\mathcal D$, then there exists a $Q\in\mathcal D$ such that $Q',Q''\in\mathcal D(Q)$.
    \item Every compact subset of $\R^d$ is contained in some cube from $\mathcal D$.
\end{enumerate}
\end{definition}

Throughout the rest of this section, we will consider the dyadic lattice $\mc{D}$ to be fixed. 
We start by proving that uniform local sparse domination implies convex body domination.

\begin{theorem}\label{Thm:UniLocSparseToConvexBody}
    Let $1\leq r,s<\infty$ and $1<p<\infty$. Let $\Lambda\colon L^p(\R^d)\times L^{p'}(\R^d)\to\C$ be a continuous bilinear form. Suppose that there is a constant $C_{\mathrm{Sp}}$  such that for every $Q\in \mc{D}$ and  $f,g\in L^\infty(Q)$ there is an $\eta$-sparse family $\mathcal{S}_Q\subset\mathcal{D}(Q)$ for which 
    \[
        |\Lambda(f,g)|\leq C_{\mathrm{Sp}}\sum_{P\in\mathcal{S}_Q}|P|\,\|f\|_{\avgL^r(P)}\|g\|_{\avgL^s(P)}.
    \]
    Then, for all $\vec f,\vec g \in L^\infty_c(\R^d,\C^n)$, there exists a $\frac{1}{2}$-sparse family $\mathcal{S}\subset\mathcal{D}$ for which
    \[
        |\Lambda(\vec f,\vec g)|\leq (4n)^{1+\frac{1}{r}+\frac{1}{s}}\eta^{-1}C_{\mathrm{Sp}}\sum_{P\in\mathcal{S}}|P|\llangle \vec f\rrangle_{\avgL^r(P)}\cdot\llangle \vec g\rrangle_{\avgL^s(P)}.
    \] 
\end{theorem}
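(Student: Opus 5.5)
The plan is a stopping-time recursion over a tree of cubes. At each cube we remove the bad part with the randomization Lemma~\ref{Lemma:KeyRandomization}, bound the good part using the scalar local sparse hypothesis after an Auerbach change of coordinates, and then recurse on the stopping cubes.

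\textbf{Setup.} Fix $\vec f,\vec g\in L^\infty_c(\R^d,\C^n)$. By property (3) of Definition~\ref{Def:DyadicLattice}, pick $Q_0\in\mc D$ containing both supports. We build $\mc S$ generation by generation, starting from $\mc S_0=\{Q_0\}$.

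\textbf{Local step at a cube $Q\in\mc D$.}
\begin{enumerate}
\item Apply Lemma~\ref{lem:auerbach} with $X=\avgL^r(Q)$ and $Y=\avgL^s(Q)$ to $\vec x=\ind_Q\vec f$. This gives $A=A_{\vec x}$ and transformed coordinates $f'_j$, $g'_j$ with
\[
\sum_{j}\|f'_j\|_{\avgL^r(Q)}\|g'_j\|_{\avgL^s(Q)}\le n\,\llangle \vec f\rrangle_{\avgL^r(Q)}\cdot\llangle \vec g\rrangle_{\avgL^s(Q)}.
\]
Multiplication by scalar functions such as $\ind_R$, $m_\varepsilon$, $n_\varepsilon$ commutes with $A$. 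Hence the identity $\Lambda(A\vec x,A^{-\top}\vec y)=\Lambda(\vec x,\vec y)$ lets us replace $(\ind_Q\vec f,\ind_Q\vec g)$ by $(\ind_Q\vec f',\ind_Q\vec g')$ in every term below.
\item Run Lemma~\ref{lemma:multistoptime} on the $f'_j,g'_j$ to get the stopping family $\mc F_Q$ with $\sum_{R\in\mc F_Q}|R|\le\frac12|Q|$.
\item By Lemma~\ref{Lemma:KeyRandomization}, applied coordinatewise and summed over $j$,
\[
\Lambda(\ind_Q\vec f,\ind_Q\vec g)=\E\,\Lambda(m_\varepsilon \vec f',n_\varepsilon\vec g')+\sum_{R\in\mc F_Q}\Lambda(\ind_R\vec f,\ind_R\vec g).
\]
\end{enumerate}

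\textbf{Good-part estimate.} Fix $\varepsilon$ and $j$. Since $m_\varepsilon f'_j,\,n_\varepsilon g'_j\in L^\infty(Q)$, the hypothesis yields an $\eta$-sparse $\mc S^{j,\varepsilon}_Q\subset\mc D(Q)$. Consider a cube $P$ in this family.
\begin{itemize}
\item If $P\subseteq R$ for some $R\in\mc F_Q$, one of $m_\varepsilon,n_\varepsilon$ vanishes on $P$, so the term is zero.
\item Otherwise, $0\le m_\varepsilon,n_\varepsilon\le 2$ together with the conclusion of Lemma~\ref{lemma:multistoptime} give
\[
\|m_\varepsilon f'_j\|_{\avgL^r(P)}\|n_\varepsilon g'_j\|_{\avgL^s(P)}\le 4(4n)^{\frac1r+\frac1s}\|f'_j\|_{\avgL^r(Q)}\|g'_j\|_{\avgL^s(Q)}.
\]
\end{itemize}
Sparseness inside $Q$ gives $\sum_P|P|\le\eta^{-1}|Q|$. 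Summing over $j$, using step 1 and bounding the expectation by the supremum over $\varepsilon$, we get
\[
|\E\,\Lambda(m_\varepsilon \vec f',n_\varepsilon\vec g')|\le (4n)^{1+\frac1r+\frac1s}\eta^{-1}C_{\mathrm{Sp}}|Q|\,\llangle \vec f\rrangle_{\avgL^r(Q)}\cdot\llangle \vec g\rrangle_{\avgL^s(Q)}.
\]

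\textbf{Recursion and sparseness.} Set $\mc S_{k+1}=\bigcup_{Q\in\mc S_k}\mc F_Q$ and $\mc S=\bigcup_k\mc S_k$. Iterating the identity $K$ times, $\Lambda(\vec f,\vec g)$ equals the sum of the good parts over $\bigcup_{k<K}\mc S_k$ plus the remainder $\sum_{R\in\mc S_K}\Lambda(\ind_R\vec f,\ind_R\vec g)$. The family $\mc S$ is $\frac12$-sparse with $E_Q=Q\setminus\bigcup\mc F_Q$; these sets are pairwise disjoint by the nesting of generations.

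\textbf{Main obstacle: the remainder.} This is where I expect the real difficulty, since $\sum_R\Lambda(\ind_R\cdot,\ind_R\cdot)$ is not a value of $\Lambda$ on a single small set. I would first bound each term by the scalar sparse hypothesis and $L^\infty$ bounds:
\[
|\Lambda(\ind_Rf_j,\ind_Rg_j)|\le C_{\mathrm{Sp}}\eta^{-1}|R|\,\|f_j\|_\infty\|g_j\|_\infty .
\]
Then $\sum_{R\in\mc S_K}|R|\le 2^{-K}|Q_0|$ shows the remainder tends to $0$ as $K\to\infty$. Letting $K\to\infty$ then gives the claimed bound.
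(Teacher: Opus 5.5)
Your proposal is correct and follows the paper's proof essentially step by step: an Auerbach change of coordinates at each cube, the parallel stopping time of Lemma~\ref{lemma:multistoptime}, the randomization identity of Lemma~\ref{Lemma:KeyRandomization}, and the scalar local sparse bound on the good part, with the same constant $(4n)^{1+\frac1r+\frac1s}\eta^{-1}C_{\mathrm{Sp}}$. Your treatment of the remainder fills in the ``standard iteration argument'' that the paper leaves implicit, and it is valid: you bound $|\Lambda(\ind_R f_j,\ind_R g_j)|\le C_{\mathrm{Sp}}\eta^{-1}|R|\,\|f_j\|_\infty\|g_j\|_\infty$ via the local hypothesis and use $\sum_{R\in\mc S_K}|R|\le 2^{-K}|Q_0|$.
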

\begin{proof}
    Fix a cube $Q \in \mc{D}$ and let $A\colon \C^n\to \C^n$ be the linear transformation of Lemma \ref{lem:auerbach} with $(\vec f,\avgL^r(Q))$ in place of $(\vec x,X)$, and denote $f_j^Q\coloneqq A\vec f\cdot e_j$ and $g_j^Q\coloneqq A^{-\top}\vec g\cdot e_j$.
    Let $\mathcal{F}_Q$ be the collection given by Lemma \ref{lemma:multistoptime} using $\vec f^Q$ and $\vec g^Q$. Defining $m_\varepsilon$ and $n_\varepsilon$ as in  Lemma \ref{Lemma:KeyRandomization}, we have by the randomization identity \eqref{Eq:randomize} that
    \[
        \sum_{j=1}^n\Big[\Lambda(\ind_Qf_j^Q,\ind_Qg_j^Q)- \sum_{R\in\mathcal{F}_Q}\Lambda(\ind_Rf_j^Q,\ind_Rg_j^Q)\Big]= \sum_{j=1}^n\E \left[\Lambda(m_\varepsilon f_j^Q,n_\varepsilon g_j^Q)\right].
    \]
    Denote the probability space underlying $(\varepsilon_R)_{R \in \mc{F}_Q}$ by $\Omega$. For each $1\leq j\leq n$ and fixed $\omega \in \Omega$, applying the assumption gives us an $\eta$-sparse collection $\mathcal{S}_{Q,j,\omega}$ such that
    \[
        |\Lambda(m_\varepsilon(\omega) f_j^Q,n_\varepsilon(\omega) g_j^Q)|\leq C_{\mathrm{Sp}}\sum_{P\in\mathcal{S}_{Q,j,\omega}}|P|\,\|m_\varepsilon(\omega) f_j^Q\|_{\avgL^r(P)}\|n_\varepsilon(\omega) g_j^Q\|_{\avgL^s(P)}.
    \]
    The second conclusion of Lemma \ref{Lemma:KeyRandomization} implies that if $P\subset R\in\mathcal{F}_Q$, then the product in the above summand will vanish. Moreover, we have $0\leq m_\varepsilon(\omega),n_\varepsilon(\omega)\leq 2$. These considerations together with the conclusion of Lemma \ref{lemma:multistoptime} and the $\eta$-sparseness of $\mathcal{S}_{Q,j,\omega}$ imply that
    \begin{align*}
        \sum_{P\in\mathcal{S}_{Q,j,\omega}}|P|\,\|m_\varepsilon(\omega) f_j^Q\|_{\avgL^r(P)}\|n_\varepsilon(\omega) g_j^Q\|_{\avgL^s(P)}&\leq4\sum_{\substack{P\in\mathcal{S}_{Q,j,\omega}:\\P\not\subset R\text{ for all }R\in\mathcal{F}_Q}}|P|\,\| f_j^Q\|_{\avgL^r(P)}\| g_j^Q\|_{\avgL^s(P)}\\&\leq 4(4n)^{\frac{1}{r}+\frac{1}{s}} \eta^{-1}\abs{Q}\|f_j^Q\|_{\avgL^r(Q)}\|g_j^Q\|_{\avgL^s(Q)}.
    \end{align*}
    Note that the right-hand side is independent of $\omega$, so the same estimate holds after taking the expectation on the left-hand side. Moreover, the convex body lifting from Lemma \ref{lem:auerbach} yields
    \[
        \sum_{j=1}^n\|f_j^Q\|_{\avgL^r(Q)}\|g_j^Q\|_{\avgL^s(Q)}\leq n\,\llangle \vec f\rrangle_{\avgL^r(Q)}\cdot\llangle \vec g\rrangle_{\avgL^s(Q)}.
    \]
    Combining everything so far, we have proven that
    \[
        |\Lambda(\ind_Q\vec f,\ind_Q\vec g)- \sum_{R\in\mathcal{F}_Q}\Lambda(\ind_R\vec f,\ind_R\vec g)|\leq (4n)^{1+\frac{1}{r}+\frac{1}{s}}\eta^{-1}C_{\mathrm Sp}\,|Q|\llangle \vec f\rrangle_{\avgL^r(Q)}\cdot\llangle \vec g\rrangle_{\avgL^s(Q)}.
    \]
    We choose a cube $Q_0\in\mc D$ that satisfies $Q_0\supset\supp \vec f\cup\supp\vec g$. Then $\Lambda(\vec f,\vec g)=\Lambda(\ind_{Q_0}\vec f,\ind_{Q_0}\vec g)$ and 
    a standard iteration argument  gives us the desired conclusion. 
\end{proof}

Next we will show that for the relevant indices for weighted estimates (cf.  \cite{BFP2016}), the condition of standard sparse domination implies the assumption of the previous theorem and hence convex body domination. The proof is a simple tail estimate.

\begin{proposition}\label{Prop:GlobalToUniformLocal}
 Let $1\leq r,s<\infty$ with $\frac{1}{r}+\frac{1}{s}>1$. Let $\Lambda\colon  L^\infty_c(\R^d) \times L^\infty_c(\R^d) \to \C$ be a bilinear form. Suppose that there is a constant $C_{\mathrm{Sp}}$  such that for all $f,g \in L^\infty_c(\R^d)$ 
 there is an $\eta$-sparse family $\mathcal{S}\subset\mathcal{D}$ for which
    \[
        |\Lambda(f,g)|\leq C_{\mathrm{Sp}}\sum_{P\in\mathcal{S}}|P|\,\|f\|_{\avgL^r(P)}\|g\|_{\avgL^s(P)}.
    \]
    Then for every $Q\in\mathcal{D}$ and all $f,g \in L^\infty(Q)$  there is an $\frac{\eta}{2}$-sparse family $\mathcal{S}_Q\subset\mathcal{D}(Q)$ for which 
    \[
        |\Lambda(f,g)|\lesssim_{d,r,s}
        C_{\mathrm{Sp}}\sum_{P\in\mathcal{S}_Q}|P|\,\|f\|_{\avgL^r(P)}\|g\|_{\avgL^s(P)}.
    \]
\end{proposition}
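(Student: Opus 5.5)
Fix $Q\in\mathcal D$ and $f,g\in L^\infty(Q)$. These lie in $L^\infty_c(\R^d)$, so the hypothesis gives an $\eta$-sparse family $\mathcal S\subset\mathcal D$ with the global bound. The plan is to split $\mathcal S$ according to the position of each $P$ relative to $Q$.

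Since $f$ and $g$ are supported in $Q$, every $P\in\mathcal S$ with $P\cap Q=\emptyset$ contributes zero. In a dyadic lattice any two cubes are nested or disjoint, so each remaining $P$ either satisfies $P\in\mathcal D(Q)$ or $P\supsetneq Q$. The family $\mathcal S_Q:=\{P\in\mathcal S: P\subseteq Q\}$ is $\eta$-sparse, hence $\frac\eta2$-sparse. Everything therefore reduces to controlling the tail of strict ancestors of $Q$, and we will absorb it into a single extra term $|Q|\,\|f\|_{\avgL^r(Q)}\|g\|_{\avgL^s(Q)}$.

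For the tail, write the ancestors of $Q$ in $\mathcal S$ as $P_k$, where $P_k$ is the $k$-th dyadic ancestor with $\ell(P_k)=2^k\ell(Q)$. Since each ancestor occurs at most once,
\[
\sum_{P\in\mathcal S,\,P\supsetneq Q}|P|\,\|f\|_{\avgL^r(P)}\|g\|_{\avgL^s(P)}\le \sum_{k\ge1} 2^{kd}|Q|\,2^{-kd/r}\|f\|_{\avgL^r(Q)}\,2^{-kd/s}\|g\|_{\avgL^s(Q)},
\]
using $\|f\|_{\avgL^r(P_k)}=(|Q|/|P_k|)^{1/r}\|f\|_{\avgL^r(Q)}$ because $f$ is supported in $Q$. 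The geometric series has ratio $2^{d(1-\frac1r-\frac1s)}<1$, which is exactly where $\frac1r+\frac1s>1$ enters. Its sum is therefore $\lesssim_{d,r,s}|Q|\,\|f\|_{\avgL^r(Q)}\|g\|_{\avgL^s(Q)}$.

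The only delicate point is adding $Q$ itself to the family while keeping sparseness $\frac\eta2$. If $Q\in\mathcal S_Q$ already, nothing needs to be done. Otherwise, set $\mathcal S_Q':=\mathcal S_Q\cup\{Q\}$ and reassign the sets $E_P$ as follows. For $P\in\mathcal S_Q$, replace $E_P$ by a subset $E_P'\subseteq E_P$ with $|E_P'|=\frac12|E_P|\ge\frac\eta2|P|$, which exists by nonatomicity of Lebesgue measure. Then take $E_Q:=Q\setminus\bigcup_{P\in\mathcal S_Q}E'_P$. Its measure satisfies
\[
|E_Q|\ge|Q|-\tfrac12\sum_{P\in\mathcal S_Q}|E_P|\ge\tfrac12|Q|\ge\tfrac\eta2|Q|,
\]
since the $E_P$ are disjoint subsets of $Q$. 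By construction $E_Q$ is disjoint from all the $E_P'$. Combining the two pieces gives the claimed local bound with constant $\lesssim_{d,r,s}C_{\mathrm{Sp}}$ and an $\frac\eta2$-sparse family $\mathcal S_Q'\subset\mathcal D(Q)$.
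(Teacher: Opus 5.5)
Your proposal is correct and follows essentially the same route as the paper. You discard the cubes disjoint from $Q$, sum the geometric tail of strict ancestors with ratio $2^{d(1-\frac1r-\frac1s)}<1$, and take the family $\{P\in\mathcal S: P\subset Q\}\cup\{Q\}$. Your explicit reassignment of the sets $E_P$ to verify $\frac{\eta}{2}$-sparseness of this family is correct, and it supplies a detail that the paper only asserts.
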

\begin{proof}
    Fix $Q\in\mathcal{D}$ and let $f,g \in L^\infty(Q)$, which we extend by zero to elements of $L^\infty_c(\R^d)$. Applying the assumption gives an $\eta$-sparse collection $\mathcal{S}\subset \mathcal{D}$ such that
    \[
        |\Lambda(f,g)|\leq C_{\mathrm{Sp}}\sum_{P\in\mathcal{S}}|P|\,\|f\|_{\avgL^r(P)}\|g\|_{\avgL^s(P)}.
    \]
    For $P\in \mathcal S$ with $P \supsetneq Q$, let $\kappa\in\N$ such that    
    $\frac{\ell(Q)}{\ell(P)}=2^{-\kappa}$ and compute 
    \[
        |P|\|f\|_{\avgL^r(P)}\|g\|_{\avgL^s(P)}=\left(\frac{|Q|}{|P|}\right)^{\frac{1}{r}+\frac{1}{s}-1}|Q|\,\|f\|_{\avgL^r(Q)}\|g\|_{\avgL^s(Q)}=2^{\kappa d(1-\frac{1}{r}-\frac{1}{s})}|Q|\,\|f\|_{\avgL^r(Q)}\|g\|_{\avgL^s(Q)}.
    \]
    Therefore,
    \begin{align*}
        |\Lambda(f,g)|\leq C_{Sp}\sum_{\substack{P\in\mathcal{S}\\P\subset Q}}|P|\,\|f\|_{\avgL^r(P)}\|g\|_{\avgL^s(P)} + C_{\mathrm Sp}|Q|\,\|f\|_{\avgL^r(Q)}\|g\|_{\avgL^s(Q)}\sum_{\kappa=1}^\infty 2^{\kappa d(1-\frac{1}{r}-\frac{1}{s})}
    \end{align*}
    The geometric series converges since $\frac{1}{r}+\frac{1}{s}>1$, and the collection \[\mathcal{S}_Q\coloneqq \{P\in\mathcal{S}\,\colon\,P\subset Q\}\cup\{Q\}\] is $\frac{\eta}{2}$-sparse. This concludes the proof.
\end{proof}

Recall that, for every $p\in(r,s')$, sparseness, H\"older's inequality and the strong-type boundedness of the dyadic maximal operators yield
\[
\sum_{Q\in\mathcal S}|Q|\,\|f\|_{\avgL^r(Q)}\|g\|_{\avgL^s(Q)}
\leq\eta^{-1}\|M_r^{\mc D}f\|_{L^p(\R^d)}\|M_s^{\mc D}g\|_{L^{p'}(\R^d)}
\lesssim_{p,r,s}\eta^{-1}\|f\|_{L^p(\R^d)}\|g\|_{L^{p'}(\R^d)}.
\]
Therefore, every bilinear form satisfying sparse domination extends uniquely to a continuous bilinear form on $L^p(\R^d)\times L^{p'}(\R^d)$. Hence, combining Theorem \ref{Thm:UniLocSparseToConvexBody} and Proposition \ref{Prop:GlobalToUniformLocal}, we obtain our main dyadic result.

\begin{theorem}\label{Thm:SparseToConvexBody}
Let $1\leq r,s<\infty$ with $\frac{1}{r}+\frac{1}{s}>1$. Let $\Lambda\colon L^\infty_c(\R^d) \times L^\infty_c(\R^d) \to \C$ be a bilinear form. Suppose that there is a constant $C_{\mathrm{Sp}}$  such that for all $f,g \in L^\infty_c(\R^d)$ 
 there is an $\eta$-sparse family $\mathcal{S}\subset\mathcal{D}$ for which
    \[
        |\Lambda(f,g)|\leq C_{\mathrm{Sp}}\sum_{P\in\mathcal{S}}|P|\,\|f\|_{\avgL^r(P)}\|g\|_{\avgL^s(P)}.
    \]
    Then for all $\vec f,\vec g \in L^\infty_c(\R^d,\C^n)$, there exists a $\frac{1}{2}$-sparse family $\mathcal{S}\subset\mathcal{D}$ for which
    \[
        |\Lambda(\vec f,\vec g)|\lesssim_{d,r,s} n^{1+\frac{1}{r}+\frac{1}{s}}\eta^{-1}C_{\mathrm{Sp}}\sum_{P\in\mathcal{S}}|P|\llangle \vec f\rrangle_{\avgL^r(P)}\cdot\llangle \vec g\rrangle_{\avgL^s(P)}.
    \] 
\end{theorem}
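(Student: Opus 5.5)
The plan is to obtain Theorem \ref{Thm:SparseToConvexBody} by combining Proposition \ref{Prop:GlobalToUniformLocal} with Theorem \ref{Thm:UniLocSparseToConvexBody}. Throughout, $\Lambda$ is replaced by its continuous extension to $L^p(\R^d)\times L^{p'}(\R^d)$.

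\textbf{Step 1: continuity.} Since $\frac1r+\frac1s>1$, we have $r<s'$, so we may fix some $p\in(r,s')$; any such choice works and it only affects the intermediate constants. The sparse bound, combined with the maximal function estimate recorded just before the theorem, gives
\[
|\Lambda(f,g)|\lesssim_{p,r,s}\eta^{-1}C_{\mathrm{Sp}}\|f\|_{L^p(\R^d)}\|g\|_{L^{p'}(\R^d)},\qquad f,g\in L^\infty_c(\R^d).
\]
Because $L^\infty_c$ is dense in both $L^p$ and $L^{p'}$, $\Lambda$ extends uniquely to a continuous bilinear form on $L^p\times L^{p'}$. This extension agrees with $\Lambda$ on $L^\infty_c\times L^\infty_c$. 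Hence the componentwise quantity $\Lambda(\vec f,\vec g)$ for $\vec f,\vec g\in L^\infty_c(\R^d,\C^n)$ is the same whether computed with the original form or with the extension. This step is what makes Theorem \ref{Thm:UniLocSparseToConvexBody} applicable, since that theorem needs continuity to run the randomization Lemma \ref{Lemma:KeyRandomization}.

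\textbf{Step 2: uniform local sparse domination.} Proposition \ref{Prop:GlobalToUniformLocal} says that for every $Q\in\mathcal D$ and all $f,g\in L^\infty(Q)$ there is an $\frac\eta2$-sparse family $\mathcal S_Q\subset\mathcal D(Q)$ with
\[
|\Lambda(f,g)|\le C'\,C_{\mathrm{Sp}}\sum_{P\in\mathcal S_Q}|P|\,\|f\|_{\avgL^r(P)}\|g\|_{\avgL^s(P)},\qquad C'=C'(d,r,s).
\]
The functions fed to this bound inside the proof of Theorem \ref{Thm:UniLocSparseToConvexBody} are $m_\varepsilon f_j^Q$ and $n_\varepsilon g_j^Q$. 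These lie in $L^\infty(Q)$ because $\vec f,\vec g$ are bounded, $A$ is a fixed matrix, and $0\le m_\varepsilon,n_\varepsilon\le 2$. So the hypothesis of Theorem \ref{Thm:UniLocSparseToConvexBody} holds, with $C_{\mathrm{Sp}}$ replaced by $C'C_{\mathrm{Sp}}$ and $\eta$ replaced by $\eta/2$.

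\textbf{Step 3: conclusion and constants.} Applying Theorem \ref{Thm:UniLocSparseToConvexBody} produces a $\frac12$-sparse family $\mathcal S\subset\mathcal D$ such that
\[
|\Lambda(\vec f,\vec g)|\le (4n)^{1+\frac1r+\frac1s}\,2\eta^{-1}\,C'C_{\mathrm{Sp}}\sum_{P\in\mathcal S}|P|\llangle\vec f\rrangle_{\avgL^r(P)}\cdot\llangle\vec g\rrangle_{\avgL^s(P)}.
\]
The factor $4^{1+\frac1r+\frac1s}\cdot 2C'$ is then absorbed into $\lesssim_{d,r,s}$, which gives the claimed bound.

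The one point needing care is the bookkeeping in Step 1. The Lebesgue exponent $p$ is introduced only to make $\Lambda$ continuous, so that Lemma \ref{Lemma:KeyRandomization} can expand $\Lambda$ over the countable family $\mathcal F_Q$. Since $p$ never enters the final constant, the result depends only on $d,r,s$, and I do not anticipate any substantive obstacle.
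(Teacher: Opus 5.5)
Your proposal is correct and follows the paper's own argument. The paper likewise uses the sparse bound, for some $p\in(r,s')$, to extend $\Lambda$ continuously to $L^p(\R^d)\times L^{p'}(\R^d)$, then combines Proposition \ref{Prop:GlobalToUniformLocal} with Theorem \ref{Thm:UniLocSparseToConvexBody}, absorbing the extra factors ($2$ from $\eta/2$ and the constant $C'(d,r,s)$) into the implicit constant.
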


 As discussed in the introduction, convex body domination yields sparse domination for iterated commutators, with the oscillations of the symbols appearing in the sparse form. Combining this observation with Theorem \ref{Thm:SparseToConvexBody} shows that sparse domination implies sparse domination of iterated commutators. We state it for a repeated symbol, although the same argument allows a different symbol at each commutation step.

\begin{corollary}\label{Cor:SparseToCommutatorSparse}
Let $1\leq r,s<\infty$ with $\frac{1}{r}+\frac{1}{s}>1$. Let $\Lambda\colon L^\infty_c(\R^d)\times L^\infty_c(\R^d)\to\C$ be a bilinear form. Suppose that there is a constant $C_{\mathrm{Sp}}$ such that for all $f,g\in L^\infty_c(\R^d)$ there is an $\eta$-sparse family $\mathcal{S}\subset\mathcal{D}$ for which
\[
|\Lambda(f,g)|\leq C_{\mathrm{Sp}}\sum_{Q\in\mathcal{S}}|Q|\,\|f\|_{\avgL^r(Q)}\|g\|_{\avgL^s(Q)}.
\]
Then, for every $m\geq1$, $b\in L^\infty_{\loc}(\R^d)$ and $f,g\in L^\infty_c(\R^d)$, there exists a $\frac{1}{2}$-sparse family $\mathcal{S}\subset\mathcal{D}$ such that, for every choice of constants $(c_Q)_{Q\in\mathcal{S}}\subset\C$,
\begin{align*}
|\Lambda_b^m(f,g)|&\lesssim_{d,r,s,m}\eta^{-1}C_{\mathrm{Sp}}
\sum_{Q\in\mathcal{S}}|Q|\bigl(\|(b-c_Q)^mf\|_{\avgL^r(Q)}\|g\|_{\avgL^s(Q)}+\|f\|_{\avgL^r(Q)}\|(b-c_Q)^mg\|_{\avgL^s(Q)}\bigr).
\end{align*}
\end{corollary}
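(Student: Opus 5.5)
The plan is to reduce the commutator estimate to Theorem \ref{Thm:SparseToConvexBody} applied with $n=m+1$, via the standard lifting of \cite{Hyt2024,IPRR2021}. Assume first $b\in L^\infty_c$-type bounded on the relevant cube; this is harmless, since $f,g$ have compact support and only the values of $b$ on a fixed cube $Q_0\supset\supp f\cup\supp g$ matter. Define vector-valued functions $\vec f,\vec g\in L^\infty_c(\R^d,\C^{m+1})$ by
\[
\vec f\coloneqq\bigl(b^kf\bigr)_{k=0}^m,\qquad \vec g\coloneqq\Bigl((-1)^k\tbinom{m}{k}b^{m-k}g\Bigr)_{k=0}^m .
\]
Then, by the binomial formula for $\Lambda_b^m$, we have $\Lambda(\vec f,\vec g)=\sum_k(-1)^k\binom mk\Lambda(b^kf,b^{m-k}g)=\Lambda_b^m(f,g)$. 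Theorem \ref{Thm:SparseToConvexBody} therefore produces a $\frac12$-sparse $\mathcal S\subset\mathcal D$ with
\[
|\Lambda_b^m(f,g)|\lesssim_{d,r,s,m}\eta^{-1}C_{\mathrm{Sp}}\sum_{Q\in\mathcal S}|Q|\,\llangle\vec f\rrangle_{\avgL^r(Q)}\cdot\llangle\vec g\rrangle_{\avgL^s(Q)} .
\]
Since $\mathcal S$ depends only on $\vec f,\vec g$ (hence on $f,g,b$) and not on the constants $c_Q$, it remains to show that, for each cube $Q$ and each $c\in\C$,
\[
\llangle\vec f\rrangle_{\avgL^r(Q)}\cdot\llangle\vec g\rrangle_{\avgL^s(Q)}\lesssim_m \|(b-c)^mf\|_{\avgL^r(Q)}\|g\|_{\avgL^s(Q)}+\|f\|_{\avgL^r(Q)}\|(b-c)^mg\|_{\avgL^s(Q)}.
\]

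The key step is a change of variables using the invariance $\Lambda(A\vec x,A^{-\top}\vec y)=\Lambda(\vec x,\vec y)$, or more directly the identity $\vec a\cdot\vec b=A\vec a\cdot A^{-\top}\vec b$ together with the fact that $A\llangle\vec f\rrangle=\llangle A\vec f\rrangle$. I choose $A=A_c$ to be the triangular matrix realizing the binomial expansion $(b-c)^k=\sum_{i\le k}\binom ki(-c)^{k-i}b^i$, so that $A_c\vec f=((b-c)^kf)_k$. A direct check of the binomial convolution shows $A_c^{-\top}\vec g=((-1)^k\binom mk(b-c)^{m-k}g)_k$, since the pairing $\sum_k(-1)^k\binom mk x^k y^{m-k}=(y-x)^m$ is invariant under the shift $x,y\mapsto x-c,y-c$. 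This reduces the problem to the case $c=0$ with $b$ replaced by $b-c$. Now pick $\phi,\psi$ in the unit balls of $\avgL^{r'}(Q)$ and $\avgL^{s'}(Q)$. Then the dot product of $\langle\vec f,\phi\rangle$ and $\langle\vec g,\psi\rangle$ is bounded by $\sum_k\binom mk\|(b-c)^kf\|_{\avgL^r(Q)}\|(b-c)^{m-k}g\|_{\avgL^s(Q)}$.

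The main obstacle is controlling the mixed terms $0<k<m$. I would use a rescaling trick rather than pointwise interpolation. The convex body product is unchanged if $\vec f\mapsto D\vec f$ and $\vec g\mapsto D^{-1}\vec g$ for diagonal $D=\mathrm{diag}(t^k)$, $t>0$. The mixed terms become $t^{k}\cdot t^{-k}$ times the same quantity, so that alone gives nothing, and instead I use Hölder. Writing $|b-c|^k|f|=(|b-c|^m|f|)^{k/m}|f|^{1-k/m}$ gives $\|(b-c)^kf\|_{\avgL^r}\le\|(b-c)^mf\|^{k/m}\|f\|^{1-k/m}$, and similarly for $g$ with exponent $(m-k)/m$. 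Setting $\alpha=\|(b-c)^mf\|\,\|g\|$ and $\beta=\|f\|\,\|(b-c)^mg\|$, the product is at most $\alpha^{k/m}\beta^{1-k/m}\le\alpha+\beta$ by the weighted AM--GM inequality. Summing over $k$ with the binomial weights yields the claim with constant $2^m$, and combining this with the display from Theorem \ref{Thm:SparseToConvexBody} finishes the proof.
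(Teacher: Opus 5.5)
Your proposal is correct. The first step is the same as the paper's: write $\Lambda_b^m(f,g)=\Lambda(\vec f,\vec g)$ with $(m+1)$-vectors built from powers of $b$, then apply Theorem~\ref{Thm:SparseToConvexBody}. The paper phrases this via \cite[Lemma 7.1 and Example 7.6]{Hyt2024}; in substance it is the binomial identity you use.

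The two arguments differ in how $\llangle\vec f\rrangle_{\avgL^r(Q)}\cdot\llangle\vec g\rrangle_{\avgL^s(Q)}$ is bounded by the oscillation terms. The paper notes that for $\phi,\psi$ in the dual unit balls
\[
\langle\vec f,\phi\rangle\cdot\langle\vec g,\psi\rangle=\frac{1}{|Q|^2}\int_Q\int_Q\bigl(b(x)-b(y)\bigr)^m f(y)\phi(y)\,g(x)\psi(x)\,dy\,dx .
\]
The pointwise inequality $|b(x)-b(y)|^m\leq 2^{m-1}\bigl(|b(x)-c_Q|^m+|b(y)-c_Q|^m\bigr)$ and H\"older's inequality then finish in one line, with constant $2^{m-1}$. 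The constant $c_Q$ enters for free because the kernel is invariant under $b\mapsto b-c$.

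You work coordinate-wise instead. The triangular change of basis $A_c$ is what makes $c$ appear at all: a coordinate-wise triangle inequality applied directly would only give $\sum_k\binom mk\|b^kf\|\,\|b^{m-k}g\|$. You computed $A_c^{-\top}\vec g$ correctly via the shift invariance of $\sum_k(-1)^k\binom mk x^ky^{m-k}=(y-x)^m$. You then need H\"older interpolation and weighted AM--GM to absorb the mixed terms $0<k<m$.

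Your route is valid and elementary. It relies on the same invariance $\llangle A\vec f\rrangle\cdot\llangle A^{-\top}\vec g\rrangle=\llangle\vec f\rrangle\cdot\llangle\vec g\rrangle$ that drives the main proof. The kernel argument is shorter and avoids both the change of basis and the interpolation. If you keep the weights $\frac km$ and $1-\frac km$ from AM--GM instead of bounding by $\alpha+\beta$, you also recover the constant $2^{m-1}$.

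Your preliminary reduction to bounded $b$ is unnecessary. Since $b\in L^\infty_{\loc}(\R^d)$, you already have $b^kf,\,b^{m-k}g\in L^\infty_c(\R^d)$.
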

\begin{proof}
By \cite[Lemma 7.1 and Example 7.6]{Hyt2024}, applied to the convex body domination of order $n=m+1$ in Theorem \ref{Thm:SparseToConvexBody}, there exists a $\frac{1}{2}$-sparse family $\mathcal{S}\subset\mathcal{D}$ such that
\[
|\Lambda_b^m(f,g)|\lesssim_{d,r,s,m}\eta^{-1}C_{\mathrm{Sp}}\sum_{Q\in\mathcal{S}}|Q|\llangle\vec b_mf\rrangle_{\avgL^r(Q)}\cdot\llangle\vec a_mg\rrangle_{\avgL^s(Q)},
\]
where
$\vec a_m\coloneqq\brb{b^{m-k}}_{k=0}^m$ and $\vec b_m\coloneqq\brb{(-1)^k\binom{m}{k}b^k}_{k=0}^m$ and thus
\[
\vec a_m(x)\cdot\vec b_m(y)=(b(x)-b(y))^m.
\]
Therefore,  the pointwise estimate
\[
|b(x)-b(y)|^m\leq2^{m-1}\bigl(|b(x)-c_Q|^m+|b(y)-c_Q|^m\bigr), \qquad Q \in \mc{S}
\]
gives the desired conclusion.
\end{proof}

\section{Sparse domination with arbitrary cubes}
For bilinear forms that are not adapted to a fixed dyadic lattice $\mc{D}$, one often obtains sparse domination of the form
\[
    \abs{\Lambda(f,g)}\leq C_{\mathrm{Sp}}\sum_{R\in\mathcal{S}}|R| \,\|f\|_{\avgL^r(R)}\|g\|_{\avgL^s(R)},
\]
where $\mathcal{S}$ is an $\eta$-sparse family of arbitrary cubes. The main idea from the preceding section is applicable to such forms, but some additional geometric bookkeeping is required.

\subsection{Reduction to an asymmetric dyadic sparse form}

We first show that a sparse form over arbitrary cubes can be replaced by a sparse form over cubes from a fixed dyadic lattice $\mc D$, at the cost of dilating the cubes in the $f$-average. For a cube $Q$, we denote by $3Q$ the cube with the same center as $Q$ and side length $3\ell(Q)$.

\begin{lemma}\label{Lemma:OneDyadicLattice}
Let $1\leq r,s<\infty$. If $\mc S$ is an $\eta$-sparse family of arbitrary cubes, then for every $f,g\in L^\infty_c(\R^d)$ there exists an $\frac{\eta}{6^d}$-sparse family $\mc T\subset\mc D$ such that
\[
    \sum_{Q\in\mc S}|Q|\,\|f\|_{\avgL^r(Q)}\|g\|_{\avgL^s(Q)}
    \lesssim_d\eta^{-1}\sum_{R\in\mc T}|R|\,\|f\|_{\avgL^r(3R)}\|g\|_{\avgL^s(R)}.
\]
\end{lemma}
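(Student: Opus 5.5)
Given $Q\in\mc S$, I assign it to a dyadic cube $R_Q\in\mc D$ with $\ell(Q)/2<\ell(R_Q)\leq\ell(Q)$ that contains the center of $Q$. Since $\ell(R_Q)\le\ell(Q)$, every point of $R_Q$ lies within $\ell(Q)$ of the center of $Q$ in sup-norm. Hence $R_Q\subset 3Q$. Conversely, $Q\subset 3R_Q$ fails in general, so I will use instead that $Q$ is covered by at most $3^d$ dyadic cubes of side $\ell(R_Q)$, namely those neighbours of $R_Q$ meeting $Q$. This reverses the asymmetry.

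The plan is therefore to set things up so that the \emph{$g$-average} sits on a single dyadic cube and the $f$-average on its triple. Let $\mc R_Q$ be the (at most $2^d\cdot$const) dyadic cubes $R$ of side length $\ell(R_Q)$ that meet $Q$. Each such $R$ satisfies $R\subset 3Q$ and $Q\subset 3R$ up to a dimensional adjustment; a cube of side $\geq\ell(Q)/2$ meeting $Q$ has its triple containing $Q$ only when $\ell(R)\ge\ell(Q)/2$, which holds. Then $\int_Q|g|^s\le\sum_{R\in\mc R_Q}\int_R|g|^s$, so
\[
\|g\|_{\avgL^s(Q)}\lesssim_d\sum_{R\in\mc R_Q}\|g\|_{\avgL^s(R)},\qquad \|f\|_{\avgL^r(Q)}\lesssim_d\|f\|_{\avgL^r(3R)}\ \text{ for } R\in\mc R_Q,
\]
using $|Q|\simeq_d|R|$ and $Q\subset 3R$. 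This gives
\[
|Q|\,\|f\|_{\avgL^r(Q)}\|g\|_{\avgL^s(Q)}\lesssim_d\sum_{R\in\mc R_Q}|R|\,\|f\|_{\avgL^r(3R)}\|g\|_{\avgL^s(R)}.
\]

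Summing over $Q\in\mc S$ produces a sum over the multiset $\{(Q,R):R\in\mc R_Q\}$. I let $\mc T'$ be the set of dyadic cubes appearing. The main obstacle is that a single $R$ may arise from many $Q$, and $\mc T'$ need not be sparse. To count multiplicity I use sparseness of $\mc S$. If $R\in\mc R_Q$, then $Q\subset 3R$ and $|Q|\ge 2^{-d}|R|$. The sets $E_Q$ are disjoint subsets of $3R$ of measure $\ge\eta 2^{-d}|R|$, so at most $6^d\eta^{-1}$ cubes $Q$ share a given $R$. This yields the factor $\eta^{-1}$ in the bound.

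It remains to show that $\mc T'$ is $\eta/6^d$-sparse. I would verify this via the Carleson characterization. For $R_0\in\mc D$, each $R\in\mc T'$ with $R\subset R_0$ comes from some $Q$ with $Q\subset3R\subset3R_0$ and $|R|\le2^d|Q|$. Using the multiplicity bound in the other direction (each $Q$ yields at most $c_d$ cubes $R$) together with the Carleson property of $\mc S$ (constant $\eta^{-1}$), I get
\[
\sum_{R\in\mc T',R\subset R_0}|R|\lesssim_d\sum_{Q\in\mc S,\,Q\subset 3R_0}|Q|\le\eta^{-1}|3R_0|.
\]
The equivalence of Carleson and sparse families (for a dyadic lattice, Carleson constant $\Lambda$ gives $\Lambda^{-1}$-sparse) then gives sparseness with constant $\gtrsim_d\eta$. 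If the exact constant $\eta/6^d$ is required, I would track the bookkeeping more carefully, choosing $R_Q$ as the dyadic cube with $\ell(Q)/2<\ell(R)\le\ell(Q)$ containing the center, so that $3R\supset Q$ directly. In that case only one $R$ is needed per $Q$ for the $f$ side, with the $g$ side handled by splitting $Q$ into at most $2^d$ pieces as above.
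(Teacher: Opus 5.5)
There is a genuine gap in the geometric step. You take the covering cubes $R\in\mc R_Q$ at the scale $\ell(Q)/2<\ell(R)\le\ell(Q)$ and assert that $Q\subset 3R$. For a cube $R$ that merely meets $Q$, however, this inclusion requires $\ell(R)\ge\ell(Q)$. In each coordinate $Q$ can protrude from $R$ by almost $\ell(Q)$, while $3R$ extends only $\ell(R)$ beyond $R$.

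For a concrete failure, take $d=1$ with the standard lattice and $Q=[\tfrac38,\tfrac98)$. Then $\mc R_Q=\{[0,\tfrac12),[\tfrac12,1),[1,\tfrac32)\}$ and $3[0,\tfrac12)=[-\tfrac12,1)\not\supset Q$. With $f=\ind_{[1,9/8)}$ and $g=\ind_{[3/8,1/2)}$, the left-hand side of your per-cube estimate $|Q|\,\|f\|_{\avgL^r(Q)}\|g\|_{\avgL^s(Q)}\lesssim_d\sum_{R\in\mc R_Q}|R|\,\|f\|_{\avgL^r(3R)}\|g\|_{\avgL^s(R)}$ is positive, while every term on the right vanishes. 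The same false inclusion is used in your multiplicity count ($E_Q\subset 3R$) and in your Carleson estimate.

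Your closing fallback does not repair this. For the cube $R_Q$ containing the center of $Q$ one does have $Q\subset 3R_Q$, contrary to your first paragraph. But pairing $3R_Q$ for $f$ with the pieces $R'\in\mc R_Q$ for $g$ produces products $\|f\|_{\avgL^r(3R_Q)}\|g\|_{\avgL^s(R')}$ with $R'\neq R_Q$. These are not of the form $\|f\|_{\avgL^r(3R)}\|g\|_{\avgL^s(R)}$ required by the lemma, and $R_Q$ alone is too small to control $\|g\|_{\avgL^s(Q)}$. Using a larger dilate such as $5R$ would restore the inclusion, but that proves a different statement.

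The repair is to move up one generation, as the paper does. Use the at most $2^d$ cubes $R\in\mc D$ with $\ell(Q)\le\ell(R)<2\ell(Q)$ and $|R\cap Q|>0$. These cover $Q$, and each satisfies $Q\subset 3R$ and $|R|<2^d|Q|$, so $\|f\|^r_{\avgL^r(Q)}\le 6^d\|f\|^r_{\avgL^r(3R)}$. Your count then goes through: the sets $E_Q\subset 3R$ are disjoint with $|E_Q|\ge\eta 2^{-d}|R|$, so at most $6^d\eta^{-1}$ cubes $Q$ share a given $R$. Your constants $|Q|\ge 2^{-d}|R|$ and $6^d\eta^{-1}$ in fact already correspond to this scale.

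Summing over all covering cubes still costs an extra factor $2^d$ in the Carleson bound, so you would only get $\frac{\eta}{12^d}$-sparseness. The paper avoids this by assigning to each $Q$ only the covering cube $R_Q$ that maximizes $\|g\|_{\avgL^s(R)}$. This gives $\|g\|^s_{\avgL^s(Q)}\le 4^d\|g\|^s_{\avgL^s(R_Q)}$. Choosing one $Q_R$ with $R_{Q_R}=R$ is then injective, and $\sum_{R\in\mc T,\,R\subset P}|R|\le 2^d\eta^{-1}\sum|E_{Q_R}|\le 6^d\eta^{-1}|P|$. This yields exactly the $\frac{\eta}{6^d}$-sparseness in the statement.
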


\begin{proof}
For  $Q\in\mc S$ let $\mc F_Q\subseteq \mc{D}$ be the collection of cubes with $\ell(Q)\leq \ell(R) <2\ell(Q)$ and $|R\cap Q|>0$. Then $\mc F_Q$ contains at most $2^d$ cubes, covers $Q$ and for  every $R\in\mc F_Q$ we have  $Q\subset3R$. Choose $R_Q\in\mc F_Q$ such that
\[
    \|g\|_{\avgL^s(R_Q)}=\max_{R\in\mc F_Q}\|g\|_{\avgL^s(R)}.
\]
Since $|R_Q|\leq 2^d|Q|$ and $Q\subset3R_Q$, we have
\begin{align*}
        \|f\|_{\avgL^r(Q)}^r&\leq6^{d}\|f\|_{\avgL^r(3R_Q)}^r\\
    \|g\|_{\avgL^s(Q)}^s
    &\leq\frac{1}{|Q|}\sum_{R\in\mc F_Q}\int_R|g|^s
    \leq4^d\|g\|_{\avgL^s(R_Q)}^s.
\end{align*}

Set $\mc T\coloneqq\{R_Q:Q\in\mc S\}$. For $Q \in \mc{S}$ let $E_Q$ be the pairwise disjoint measurable sets such that $E_Q\subset Q$ and $|E_Q|\geq\eta|Q|$. Then we have
\[
    \sum_{\substack{Q\in\mc S\\R_Q=R}}|Q|
    \leq\eta^{-1}\sum_{\substack{Q\in\mc S\\R_Q=R}}|E_Q|
    \leq\eta^{-1}|3R|
    =3^d\eta^{-1}|R|,
\]
and therefore
\begin{align*}
    \sum_{Q\in\mc S}|Q|\,\|f\|_{\avgL^r(Q)}\|g\|_{\avgL^s(Q)}
    &\leq6^{d/r}4^{d/s}\sum_{R\in\mc T}\sum_{\substack{Q\in\mc S\\R_Q=R}}|Q|\,\|f\|_{\avgL^r(3R)}\|g\|_{\avgL^s(R)}\\
    &\lesssim_d \eta^{-1}\sum_{R\in\mc T}|R|\,\|f\|_{\avgL^r(3R)}\|g\|_{\avgL^s(R)}.
\end{align*}

Finally, to show sparseness of $\mc{T}$, for each $R\in\mc T$, choose one $Q_R\in\mc S$ such that $R_{Q_R}=R$. Note that if $Q_R=Q_{R'}$ for $R,R' \in \mc{T}$, then $R=R_{Q_{R}}=R_{Q_{R'}}=R'$, so the $Q_R$'s are distinct. Therefore, for every $P\in\mc D$, we obtain
\[
    \sum_{\substack{R\in\mc T\\R\subset P}}|R|
    \leq2^d\sum_{\substack{R\in\mc T\\R\subset P}}|Q_R|
    \leq {2^d} \eta^{-1}\sum_{\substack{R\in\mc T\\R\subset P}}|E_{Q_R}|
    \leq {2^d} \eta^{-1}|3P|
    = {6^d} \eta^{-1}|P|.
\]
Thus $\mc T$ is $\frac{6^d}{\eta}$-Carleson and therefore $\frac{\eta}{6^d}$-sparse, see \cite{LN2019}.
\end{proof}

\subsection{Extensions of the auxiliary lemmas}

We next extend Lemmas \ref{lemma:multistoptime} and \ref{Lemma:KeyRandomization} to the asymmetric cube pairs $(3Q,Q)$. The  additional difficulty is that the cubes $3R$ corresponding to disjoint dyadic cubes $R$ need not be disjoint.

\begin{lemma}\label{Lemma:AsymmetricStoppingTime}
 Let $1\leq r,s<\infty$ and let $Q$ be a cube. Let $f_j\in \avgL^r(3Q)$ and $g_j\in \avgL^s(Q)$ for $j=1,\dots, n$. Then there exists a disjoint collection $\mathcal{F}_Q\subset \mathcal{D}(Q)$ such that
\[
    \sum_{R\in\mc F_Q}|R|\leq\tfrac{1}{2}|Q|,
    \qquad
    \sum_{R\in\mc F_Q}\ind_{3R}\leq2\cdot3^d.
\]
Moreover, if $P\in\mc D(Q)$ is not contained in any $R\in\mc F_Q$, then for $j=1,\ldots,n$
\[
    \|f_j\|_{\avgL^r(3P)}\lesssim_d n^{\frac1r}\|f_j\|_{\avgL^r(3Q)},\qquad
    \|g_j\|_{\avgL^s(P)}\lesssim_d n^{\frac1s}\|g_j\|_{\avgL^s(Q)}.
\]
\end{lemma}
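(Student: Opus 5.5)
We follow the proof of Lemma \ref{lemma:multistoptime}, now with a maximal function adapted to the dilated cubes, and then enforce the bounded overlap of the cubes $3R$ by a second selection step. Let $\lambda_d$ be a large dimensional constant to be fixed later. Let $\mathcal{F}_Q^0$ be the collection of maximal cubes $R\in\mathcal{D}(Q)\setminus\{Q\}$ for which
\[
\|f_j\|_{\avgL^r(3R)}>(\lambda_d n)^{\frac1r}\|f_j\|_{\avgL^r(3Q)}\qquad\text{or}\qquad \|g_j\|_{\avgL^s(R)}>(\lambda_d n)^{\frac1s}\|g_j\|_{\avgL^s(Q)}
\]
for some $1\leq j\leq n$.

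\emph{Measure estimate.} For the $g_j$-condition we bound $\sum|R|$ by the level set of $M_s^{\mathcal{D}(Q)}g_j$, exactly as before. For the $f_j$-condition, note that $3R\subset 3Q$ whenever $R\in\mathcal{D}(Q)$. Hence every such $R$ lies in the set
\[
\{x\in 3Q: M_r(\ind_{3Q}f_j)(x)>c_d(\lambda_d n)^{1/r}\|f_j\|_{\avgL^r(3Q)}\},
\]
where $M_r$ is the uncentered maximal operator over cubes, since $x\in R$ implies $x\in 3R$. The weak type $(r,r)$ bound for $M_r$ (with a dimensional constant) controls the measure of this set by $C_d(\lambda_d n)^{-1}|3Q|=C_d3^d(\lambda_d n)^{-1}|Q|$. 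The cubes of $\mathcal{F}_Q^0$ are disjoint, so summing over $j$ and over both conditions gives $\sum_{R\in\mathcal{F}_Q^0}|R|\leq\frac14|Q|$ once $\lambda_d$ is chosen large.

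\emph{Bounded overlap.} This is the main obstacle, since the cubes $3R$ for disjoint $R$ may pile up. I would fix it by enlarging the stopping cubes. Consider the maximal cubes of $\mathcal{D}(Q)$ among the cubes $P$ with $|P\cap\bigcup\mathcal{F}_Q^0|>\frac12|P|$ (a Whitney/Calderón–Zygmund type selection), together with any $R\in\mathcal{F}_Q^0$ not contained in one of them. Call the resulting collection $\mathcal{F}_Q$. The total measure is at most $2\sum_{R\in\mathcal{F}_Q^0}|R|\leq\frac12|Q|$, and $\mathcal{F}_Q$ is disjoint. The key point is that adjacent cubes of $\mathcal{F}_Q$ have comparable sizes. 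If $R$ is much smaller than a neighbouring cube $R'$, then the parent of $R$ meets $R'$ on a large portion, which would contradict the maximality of $R$. After checking this, a point $x$ lies in $3R$ only for cubes $R$ of roughly the same side length as a neighbour of $x$. Counting cubes of a fixed size touching the $3$-neighbourhood gives at most $3^d$ per admissible generation. I expect the overlap constant to come out as $2\cdot3^d$ only after a careful count, and this is the step I am least sure of. An alternative is to directly select, among stopping cubes whose dilates overlap, only those of maximal size: for a family of dyadic cubes with $3R$ mutually intersecting and $R$ disjoint, the number per generation is at most $3^d$, and summing over two consecutive generations gives $2\cdot3^d$.

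\emph{Averages.} Let $P\in\mathcal{D}(Q)$ be a cube not contained in any $R\in\mathcal{F}_Q$. Then $P$ is in particular not contained in any cube of $\mathcal{F}_Q^0$, because each cube of $\mathcal{F}_Q^0$ is contained in a cube of $\mathcal{F}_Q$. By the maximality in the stopping condition, $\|f_j\|_{\avgL^r(3P)}\leq(\lambda_dn)^{1/r}\|f_j\|_{\avgL^r(3Q)}$, and the analogous bound holds for $g_j$. This gives the claimed estimates with $\lambda_d^{1/r}\lesssim_d1$.
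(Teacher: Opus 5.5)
Your measure estimate and your control of the averages are fine. Any enlargement of $\mathcal F_Q^0$ keeps the average bounds, since every stopping cube stays covered. The gap is the bounded overlap, and it is not a matter of careful counting: the density selection does not have the comparability property you claim.

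\emph{Why the comparability claim fails.} Two dyadic cubes are nested or disjoint. So if $\ell(R)<\ell(R')$ and $R\not\subset R'$, then $\widehat R$ is disjoint from $R'$, and $R'$ contributes nothing to $|\widehat R\cap\bigcup\mathcal F_Q^0|$. The condition $|P\cap\bigcup\mathcal F_Q^0|>\frac12|P|$ only sees the dyadic ancestors of a cube, never its geometric neighbours.

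\emph{Counterexample.} Take $d\geq2$, $Q=[0,1)^d$, $\ell_k=2^{-N-3k}$ and $R_k=[\frac12-2\ell_k,\frac12-\ell_k)^d$. Every ancestor of $R_k$ is a corner cube $[\frac12-2^{-m},\frac12)^d$. In such a cube, $\bigcup_kR_k$ has density at most $2^{-d}\sum_{i\geq0}2^{-3di}<\frac12$. Hence your second step returns $\mathcal F_Q=\{R_k\}_k$. Yet $3R_k=[\frac12-3\ell_k,\frac12)^d$, so the point $(\frac12-\delta)(1,\dots,1)$ lies in $\gtrsim\log(1/\delta)$ of the dilates. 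This configuration is exactly $\mathcal F_Q^0$ for $n=1$, $f_1=0$ and $g_1=c\sum_k\ind_{R_k}+b\ind_{Q\setminus\bigcup_kR_k}$, with $c^s=2\lambda_db^s$ and $N$ large.

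Your fallback, keeping only the largest cubes among those with intersecting dilates, fails for a different reason. A discarded stopping cube is no longer covered, so its subcubes violate the average bounds.

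\emph{What is missing.} You need a Whitney-type selection whose defining condition is monotone under inclusion in $\R^d$, not merely along the dyadic tree. The paper lets $E_Q$ be the union of the level sets of $M_r(\ind_{3Q}f_j)$ and $M_s(\ind_Qg_j)$, chosen so that $|E_Q|\leq\frac12|Q|$. It then takes $\mathcal F_Q$ to be the maximal $R\in\mathcal D(Q)$ with $9R\subset E_Q$.
\begin{itemize}
\item Overlap: if $x\in3R\cap3R'$ and $\ell(R)\leq\frac14\ell(R')$, then $9\widehat R\subset9R'\subset E_Q$, contradicting the maximality of $R$. So the cubes whose dilates contain $x$ lie in two consecutive generations, with at most $3^d$ per generation, giving $2\cdot3^d$.
\item Averages: they follow from choosing a point of $9P\setminus E_Q$ and using $P,3P\subset9P$.
\end{itemize}

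Alternatively, you can keep your first step and replace the density selection by the maximal $P\in\mathcal D(Q)$ with $9P\subset\{M\ind_U\geq9^{-d}\}$, where $U=\bigcup\mathcal F_Q^0$. This set contains $9R$ for every $R\in\mathcal F_Q^0$ and has measure $\lesssim_d|U|$. The same comparability argument then gives the overlap bound.
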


\begin{proof}
Let $M_p$ denote the rescaled Hardy--Littlewood maximal operator. Let $c_d$ be a dimensional constant such that 
\[
    E_Q\coloneqq\bigcup_{j=1}^n\left\{M_r(\ind_{3Q}f_j)>(c_dn)^{\frac1r}\|f_j\|_{\avgL^r(3Q)}\right\}
    \cup\bigcup_{j=1}^n\left\{M_s(\ind_Qg_j)>(c_dn)^{\frac1s}\|g_j\|_{\avgL^s(Q)}\right\}
\]
satisfies $|E_Q|\leq\frac{1}{2}|Q|$ by the weak boundedness of the maximal operator. Let $\mc F_Q$ be the collection of maximal cubes $R\in\mc D(Q)$ such that $9R\subset E_Q$. Then the cubes in $\mc F_Q$ are disjoint and
\[
    \sum_{R\in\mc F_Q}|R|\leq|E_Q|\leq\tfrac{1}{2}|Q|.
\]
By maximality, the claimed estimates hold with implicit constants $(3^dc_d)^{\frac1r}$ and $(9^dc_d)^{\frac1s}$, respectively.

To prove the overlap estimate, fix $x\in E_Q$ and set $\mc F_Q(x)\coloneqq\{R\in\mc F_Q:x\in3R\}.$
If $R,R'\in\mc F_Q(x)$ and $\ell(R)\leq\frac{1}{4}\ell(R')$, then the dyadic parent $\widehat{R}$ of $R$ satisfies
$9\widehat{R}\subset9R'\subset E_Q,$
contradicting the maximality of $R$. Hence the cubes in $\mc F_Q(x)$ belong to at most two consecutive generations. At each generation, at most $3^d$ of the dilated cubes contain $x$, and therefore
\[
    \sum_{R\in\mc F_Q}\ind_{3R}(x)\leq2\cdot3^d.
\]
This finishes the proof.
\end{proof}

\begin{lemma}\label{Lemma:AsymmetricRandomization}
Let $p\in(1,\infty)$ and let $\Lambda\colon L^p(\R^d)\times L^{p'}(\R^d)\to\C$ be a continuous bilinear form. Fix a cube $Q$, let $\mathcal{F}_Q\subset \mathcal{D}(Q)$ be a collection of disjoint cubes such that
\[
    \sum_{R\in\mc F_Q}\ind_{3R}\leq 2\cdot3^d.
\]
Let $\{\varepsilon_R\}_{R\in \mathcal{F}_Q}$ be independent Bernoulli random variables with $\P(\cbrace{\varepsilon_R=0})= \P(\cbrace{\varepsilon_R=1})=\frac12$. Set $G_Q\coloneqq Q\setminus\bigcup_{R\in\mc F_Q}R$,
and
\[m_\varepsilon(x)\coloneqq\ind_{3Q}(x)
    \prod_{\substack{R\in\mc F_Q:x\in3R}}2{\varepsilon_R},\qquad
    n_\varepsilon\coloneqq\ind_{G_Q}+2\sum_{R\in\mc F_Q}(1-\varepsilon_R)\ind_R,
\]
where the empty product equals $1$. Then $0\leq m_\varepsilon \leq 2^{2\cdot 3^d}$, $0\leq n_\varepsilon\leq 2$ and for $f\in L^p(3Q)$ and $g\in L^{p'}(Q)$ we have
\[
    \E\left[\Lambda(m_\varepsilon f,n_\varepsilon g)\right]
    =\Lambda(f,g)-\sum_{R\in\mc F_Q}\Lambda(\ind_{3R}f,\ind_Rg).
\]
Moreover, for every $R\in\mc F_Q$, either $m_\varepsilon$ vanishes on $3R$ or $n_\varepsilon$ vanishes on $R$.
\end{lemma}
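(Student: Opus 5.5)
The approach mirrors the proof of Lemma \ref{Lemma:KeyRandomization}, with the sum over $R$ in $m_\varepsilon$ replaced by a product, so that overlaps among the dilates $3R$ are handled automatically. The bounds are immediate. Each $x$ lies in at most $2\cdot3^d$ of the cubes $3R$, $R\in\mc F_Q$, so the product defining $m_\varepsilon(x)$ has at most $2\cdot 3^d$ factors, each in $\{0,2\}$. The bound $0\le n_\varepsilon\le2$ follows from disjointness of $\mc F_Q$. The vanishing statement is also direct: if $\varepsilon_R=0$, then $m_\varepsilon=0$ on $3R$, since the factor $2\varepsilon_R$ appears for every $x\in 3R$. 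If $\varepsilon_R=1$, then $n_\varepsilon=2(1-\varepsilon_R)=0$ on $R$.

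For the identity, I decompose $g=\ind_{G_Q}g+\sum_{R\in\mc F_Q}\ind_R g$. The sum converges in $L^{p'}$, so by continuity and bilinearity
\[
\Lambda(m_\varepsilon f,n_\varepsilon g)=\Lambda(m_\varepsilon f,\ind_{G_Q}g)+\sum_{R\in\mc F_Q}2(1-\varepsilon_R)\Lambda(m_\varepsilon f,\ind_R g).
\]
Interchanging expectation with these sums is justified because $m_\varepsilon$ is uniformly bounded; if $\mc F_Q$ is infinite, the random sum converges boundedly. The key computation is the expectation of $m_\varepsilon(x)$ and of $2(1-\varepsilon_R)m_\varepsilon(x)$ for each fixed $x\in 3Q$. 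By independence, $\E\prod_{R'\ni x}2\varepsilon_{R'}=\prod\E[2\varepsilon_{R'}]=1$, hence $\E m_\varepsilon=\ind_{3Q}$. For the second quantity there are two cases. If $x\notin3R$, then $1-\varepsilon_R$ is independent of $m_\varepsilon(x)$, so $\E[2(1-\varepsilon_R)m_\varepsilon(x)]=\ind_{3Q}(x)$. If $x\in3R$, then $(1-\varepsilon_R)\varepsilon_R=0$ forces the expectation to be $0$. Thus
\[
\E[2(1-\varepsilon_R)m_\varepsilon]=\ind_{3Q}-\ind_{3R}=\ind_{3Q\setminus 3R},
\]
where I use $3R\subset3Q$ for $R\in\mc D(Q)$.

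To move the expectation inside $\Lambda$, I note that $\varepsilon\mapsto m_\varepsilon f$ takes values in $L^p$, is uniformly bounded, and that $\Lambda(\cdot,h)$ is a continuous linear functional. Hence $\E\Lambda(m_\varepsilon f,h)=\Lambda(\E[m_\varepsilon]f,h)$, understood as a Bochner expectation, or via pointwise expectation and Fubini/dominated convergence. Since $f\in L^p(3Q)$, this gives
\[
\E\Lambda(m_\varepsilon f,n_\varepsilon g)=\Lambda(f,\ind_{G_Q}g)+\sum_{R}\Lambda(f-\ind_{3R}f,\ind_Rg).
\]
Recombining by bilinearity and continuity yields $\Lambda(f,g)-\sum_R\Lambda(\ind_{3R}f,\ind_Rg)$, where $g$ is supported in $Q=G_Q\cup\bigcup R$.

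I expect the main obstacle to be the interchange of expectation with $\Lambda$ and with the possibly infinite sum over $\mc F_Q$. For infinitely many $\varepsilon_R$, $m_\varepsilon$ depends on infinitely many variables, although at each point only finitely many. I would handle this either by truncating to finitely many cubes and passing to the limit using the continuity of $\Lambda$ and dominated convergence in $L^p$ and $L^{p'}$, or by using the Bochner integral of the bounded measurable map $\varepsilon\mapsto m_\varepsilon f\in L^p$. In the latter case measurability holds because each $m_\varepsilon f$ is a pointwise limit of finitely determined functions.
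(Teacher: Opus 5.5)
Your proposal is correct and follows essentially the same route as the paper. You expand only $n_\varepsilon g$, use independence and $\varepsilon_R(1-\varepsilon_R)=0$ to get $\E[m_\varepsilon]=\ind_{3Q}$ and $2\E[(1-\varepsilon_R)m_\varepsilon]=\ind_{3Q\setminus 3R}$, recombine using that $f$ is supported in $3Q$, and verify the bounds and the vanishing property exactly as the paper does. Your treatment of moving the expectation inside $\Lambda$ and across the possibly infinite sum over $\mathcal{F}_Q$ is more explicit than the paper's appeal to ``a similar continuity argument''. The uniform bound on the partial sums is seen most cleanly by writing them as $\Lambda\bigl(m_\varepsilon f,\sum_{R\in F}2(1-\varepsilon_R)\ind_R g\bigr)$ and using disjointness of the cubes.
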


\begin{proof}
By the overlap assumption, at most $2\cdot3^d$ factors in the definition of $m_\varepsilon(x)$ differ from $1$, proving the bound on $m_\varepsilon$.
Moreover, since the cubes in $\mc F_Q$ are disjoint, we have $0\leq n_\varepsilon\leq2$. Thus a similar continuity argument as in the proof of Lemma \ref{Lemma:KeyRandomization} applies.

For every $x\in\R^d$, independence gives
\[
    \E[m_\varepsilon(x)]=\ind_{3Q}(x).
\]
Furthermore, for every $R\in\mc F_Q$,
\[
    2\E\left[(1-\varepsilon_R)m_\varepsilon\right]=\ind_{3Q\setminus3R}.
\]
Indeed, the left-hand side vanishes on $3R$ since $\varepsilon_R(1-\varepsilon_R)=0$, while outside $3R$ the random variable $\varepsilon_R$ is independent of $m_\varepsilon$.
We therefore obtain, as in the proof of Lemma \ref{Lemma:KeyRandomization},
\begin{align*}
    \E\left[\Lambda(m_\varepsilon f,n_\varepsilon g)\right]
    &=\E\left[\Lambda(m_\varepsilon f,\ind_{G_Q}g)\right]
      +2\sum_{R\in\mc F_Q}\E\left[(1-\varepsilon_R)\Lambda(m_\varepsilon f,\ind_Rg)\right]\\
    &=\Lambda(f,\ind_{G_Q}g)+\sum_{R\in\mc F_Q}\Lambda(\ind_{3Q\setminus3R}f,\ind_Rg)\\
    &=\Lambda(f,g)-\sum_{R\in\mc F_Q}\Lambda(\ind_{3R}f,\ind_Rg).
\end{align*}
Finally, if $\varepsilon_R=0$, then $m_\varepsilon$ vanishes on $3R$, while if $\varepsilon_R=1$, then $n_\varepsilon$ vanishes on $R$. This proves the final assertion.
\end{proof}

\subsection{Proof of the main non-dyadic result}
Having established the auxiliary results for asymmetric cube pairs, we can now prove the main result for sparse domination over arbitrary cubes. The argument is a straightforward adaptation of the proof of Theorem \ref{Thm:SparseToConvexBody}.

\begin{theorem}\label{Thm:SparseToConvexBodynondyadic}
Let $1\leq r,s<\infty$ with $\frac{1}{r}+\frac{1}{s}>1$. Let $\Lambda\colon L^\infty_c(\R^d) \times L^\infty_c(\R^d) \to \C$ be a bilinear form. Suppose that there is a constant $C_{\mathrm{Sp}}$  such that for all $f,g \in L^\infty_c(\R^d)$ 
 there is an $\eta$-sparse family of cubes $\mathcal{S}$ for which
    \[
        |\Lambda(f,g)|\leq C_{\mathrm{Sp}}\sum_{Q\in\mathcal{S}}|Q|\,\|f\|_{\avgL^r(Q)}\|g\|_{\avgL^s(Q)}.
    \]
    Then for all $\vec f,\vec g \in L^\infty_c(\R^d,\C^n)$, there exists a $\frac{1}{2}$-sparse family $\mathcal{S}\subset \mc{D}$ for which
    \[
        |\Lambda(\vec f,\vec g)|\lesssim_{d,r,s} n^{1+\frac{1}{r}+\frac{1}{s}}\eta^{-1}C_{\mathrm{Sp}}\sum_{Q\in\mathcal{S}}|Q|\llangle \vec f\rrangle_{\avgL^r(3Q)}\cdot\llangle \vec g\rrangle_{\avgL^s(Q)}.
    \] 
\end{theorem}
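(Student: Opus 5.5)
The proof follows the same two-step scheme as the dyadic case: first a local asymmetric sparse bound, then a local convex body estimate iterated over a stopping family.

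\emph{Step 1: uniform local asymmetric sparse bounds.} As observed before Theorem~\ref{Thm:SparseToConvexBody}, the hypothesis with $\frac1r+\frac1s>1$ implies that $\Lambda$ extends to a continuous bilinear form on $L^p\times L^{p'}$ for any $p\in(r,s')$. The same H\"older and maximal function argument works for non-dyadic sparse families, using $M_r$, $M_s$ in place of the dyadic maximal operators.

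Fix $Q\in\mc D$, and let $f\in L^\infty(3Q)$ and $g\in L^\infty(Q)$. Applying the hypothesis and then Lemma~\ref{Lemma:OneDyadicLattice} gives an $\frac{\eta}{6^d}$-sparse family $\mc T\subset\mc D$ with
\[
|\Lambda(f,g)|\lesssim_d \eta^{-1}C_{\mathrm{Sp}}\sum_{R\in\mc T}|R|\,\|f\|_{\avgL^r(3R)}\|g\|_{\avgL^s(R)}.
\]
Terms with $\|g\|_{\avgL^s(R)}=0$ can be discarded, so every remaining $R$ meets $Q$ in positive measure. Hence either $R\subset Q$, or $R\supsetneq Q$.

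For $R\supsetneq Q$ with $\ell(Q)/\ell(R)=2^{-\kappa}$, supports give
\[
|R|\,\|f\|_{\avgL^r(3R)}\|g\|_{\avgL^s(R)}\le 3^{-d/r}\,2^{\kappa d(1-\frac1r-\frac1s)}\,|Q|\,\|f\|_{\avgL^r(3Q)}\|g\|_{\avgL^s(Q)},
\]
using $|3Q|/|3R|=|Q|/|R|$. Summing the geometric series as in Proposition~\ref{Prop:GlobalToUniformLocal} gives, for some $\eta'\gtrsim_d\eta$, an $\eta'$-sparse family $\mc S_Q\subset\mc D(Q)$ with
\[
|\Lambda(f,g)|\lesssim_{d,r,s}\eta^{-1}C_{\mathrm{Sp}}\sum_{P\in\mc S_Q}|P|\,\|f\|_{\avgL^r(3P)}\|g\|_{\avgL^s(P)}.
\]
Here only one $\eta^{-1}$ should be kept. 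The Carleson constant of $\mc T$ is bounded by the loss already counted, and sparseness enters only once in the final summation.

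\emph{Step 2: local convex body estimate.} Fix $Q\in\mc D$. Apply Lemma~\ref{lem:auerbach} with $X=\avgL^r(3Q)$, $Y=\avgL^s(Q)$ and $\vec x=\vec f$ to obtain $A$, and set $f_j^Q=A\vec f\cdot e_j$ and $g_j^Q=A^{-\top}\vec g\cdot e_j$. Lemma~\ref{Lemma:AsymmetricStoppingTime} then gives $\mc F_Q$, with overlap $\sum_{R\in\mc F_Q}\ind_{3R}\le 2\cdot 3^d$. Lemma~\ref{Lemma:AsymmetricRandomization}, applied to $\ind_{3Q}f_j^Q$ and $\ind_Q g_j^Q$, turns
\[
\Lambda(\ind_{3Q}f^Q_j,\ind_Qg^Q_j)-\sum_{R\in\mc F_Q}\Lambda(\ind_{3R}f^Q_j,\ind_Rg^Q_j)
\]
into an expectation of $\Lambda(m_\varepsilon f_j^Q,n_\varepsilon g_j^Q)$.

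For each fixed $\omega$, apply Step 1. By the vanishing conclusion, every $P\subset R\in\mc F_Q$ contributes zero: if $P\subset R$ then $3P\subset 3R$, so either $m_\varepsilon$ vanishes on $3P$ or $n_\varepsilon$ vanishes on $P$. The bounds $m_\varepsilon\le 2^{2\cdot3^d}$ and $n_\varepsilon\le2$ are dimensional constants. The remaining $P$ are not contained in any stopping cube, so Lemma~\ref{Lemma:AsymmetricStoppingTime} bounds their averages by $n^{1/r}\|f_j^Q\|_{\avgL^r(3Q)}$ and $n^{1/s}\|g_j^Q\|_{\avgL^s(Q)}$. Sparseness then gives $\sum_{P\subset Q}|P|\lesssim\eta^{-1}|Q|$.

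Summing over $j$ and using Lemma~\ref{lem:auerbach}, together with the invariance $\Lambda(A\vec x,A^{-\top}\vec y)=\Lambda(\vec x,\vec y)$, yields
\[
\Big|\Lambda(\ind_{3Q}\vec f,\ind_Q\vec g)-\sum_{R\in\mc F_Q}\Lambda(\ind_{3R}\vec f,\ind_R\vec g)\Big|\lesssim_{d,r,s} n^{1+\frac1r+\frac1s}\eta^{-1}C_{\mathrm{Sp}}|Q|\,\llangle\vec f\rrangle_{\avgL^r(3Q)}\cdot\llangle\vec g\rrangle_{\avgL^s(Q)}.
\]

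\emph{Step 3: iteration.} Choose $Q_0\in\mc D$ containing $\supp\vec f\cup\supp\vec g$, so that $\Lambda(\vec f,\vec g)=\Lambda(\ind_{3Q_0}\vec f,\ind_{Q_0}\vec g)$. Iterate Step 2 on each $R\in\mc F_Q$, with the pair $(\ind_{3R}\vec f,\ind_R\vec g)$ playing the role of $(\ind_{3Q}\vec f,\ind_Q\vec g)$; note that $\llangle\ind_{3R}\vec f\rrangle_{\avgL^r(3R)}=\llangle\vec f\rrangle_{\avgL^r(3R)}$.

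The resulting family $\mc S=\bigcup_k\mc F^k$ is $\frac12$-sparse, with $E_Q=Q\setminus\bigcup_{R\in\mc F_Q}R$, since the stopping cubes are disjoint and cover at most half of $Q$. The remainder terms vanish in the limit because $\mc F^k$ eventually has arbitrarily small total measure, and the remainders are controlled by continuity of $\Lambda$ on $L^p\times L^{p'}$.

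\emph{Main obstacle.} I expect the hardest part to be Step 1 together with the vanishing-term bookkeeping: the asymmetric localization must produce cubes inside $\mc D(Q)$ rather than arbitrary cubes, and without losing a second factor $\eta^{-1}$. If tracking a single $\eta^{-1}$ proves delicate, I would accept $\eta^{-2}$ in intermediate steps and then check whether Lemma~\ref{Lemma:OneDyadicLattice}'s factor can be absorbed into the Carleson constant rather than the prefactor.
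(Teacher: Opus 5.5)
Your architecture (Lemma~\ref{Lemma:OneDyadicLattice}, localization as in Proposition~\ref{Prop:GlobalToUniformLocal}, Auerbach lifting on $(\avgL^r(3Q),\avgL^s(Q))$, asymmetric stopping time and randomization, iteration) is the paper's, and the argument as written does yield a convex body bound. However, that bound carries $\eta^{-2}$, not the $\eta^{-1}$ claimed in the statement. The factor $\eta^{-1}$ is paid twice:
\begin{itemize}
\item in Step~1, Lemma~\ref{Lemma:OneDyadicLattice} is applied to the $\eta$-sparse hypothesis family, which produces the prefactor $\eta^{-1}$ and a family $\mathcal T$ that is only $\frac{\eta}{6^d}$-sparse;
\item in Step~2, you bound $\sum_{P\in\mathcal S_Q,\,P\subset Q}|P|\le(\eta')^{-1}|Q|$ with $\eta'\simeq_d\eta$, because $\mathcal S_Q$ inherits this sparseness.
\end{itemize}
The sentence ``the Carleson constant of $\mathcal T$ is bounded by the loss already counted'' is not an argument. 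Once Step~1 is stated as a bound with prefactor $\eta^{-1}C_{\mathrm{Sp}}$ over an $\eta'$-sparse family, Step~2 can only use it in that form, and the two losses multiply. Your fallback of accepting $\eta^{-2}$ is exactly what the written proof gives, and it is weaker than the theorem.

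The paper spends the $\eta^{-1}$ once, before any geometric reduction. By sparseness,
\[
\sum_{Q\in\mathcal S_0}|Q|\,\|f\|_{\avgL^r(Q)}\|g\|_{\avgL^s(Q)}\le\eta^{-1}\int_{\R^d}\mathcal M_{r,s}(f,g),
\]
where $\mathcal M_{r,s}$ is the bisublinear maximal operator. The right-hand integral is in turn dominated by a sparse form over a $\frac12$-sparse family $\mathcal S_1$. Applying Lemma~\ref{Lemma:OneDyadicLattice} to $\mathcal S_1$ then costs only dimensional constants. All later sparseness parameters ($\frac{1}{2\cdot6^d}$, $\frac{1}{4\cdot6^d}$) are dimensional, so the use of sparseness in Step~2 no longer involves $\eta$.

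Alternatively, your hint can be made rigorous by never extracting the prefactor. Keep the coefficients $\lambda_R:=\sum_{Q\in\mathcal S:\,R_Q=R}|Q|$ from the proof of Lemma~\ref{Lemma:OneDyadicLattice}. These satisfy $\sum_{R\in\mathcal T,\,R\subset P}\lambda_R\le\eta^{-1}|3P|$, because $Q\subset3R_Q\subset3P$ and the sets $E_Q$ are disjoint. Fold the tail terms $R\supsetneq Q$ (each has $\lambda_R\le3^d\eta^{-1}|R|$) into the coefficient of $Q$. In Step~2, sum these coefficients instead of $|P|$; this gives $\sum_{P\subset Q}\lambda_P\lesssim_{d,r,s}\eta^{-1}|Q|$ with no separate prefactor. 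Either way, an additional step is genuinely needed.

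A minor point: the factor $3^{-d/r}$ in your tail estimate is spurious. Since $3Q\subset3R$, the identity holds with factor $1$, so this error is harmless.
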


Although the sparse family in the hypothesis of Theorem \ref{Thm:SparseToConvexBodynondyadic} consists of arbitrary cubes, the family in the conclusion may be chosen from any dyadic lattice $\mc{D}$. Of course, in order to remove the dilated cube $3P$ from the conclusion, one may also use the family $\mc{T}:=\cbrace{3P:P \in \mc{S}}$, which is $\frac{1}{2\cdot 3^d}$-sparse.

\begin{proof}[Proof of Theorem \ref{Thm:SparseToConvexBodynondyadic}]
We start by strengthening the initial sparse hypothesis. For $f,g \in  L^\infty_c(\R^d)$, let $\mc S_0$ be the $\eta$-sparse family from the hypothesis. Defining the bisublinear maximal operator as
\[
    \mc M_{r,s}(f,g)(x)\coloneqq\sup_{Q\ni x}\|f\|_{\avgL^r(Q)}\|g\|_{\avgL^s(Q)}, \qquad x \in \R^d,
\]
we have, using sparseness and \cite[(2.1)]{LN2022}, that there exists a $\frac{1}{2}$-sparse family $\mathcal{S}_1$ such that
\[
    \sum_{Q\in\mc S_0}|Q|\,\|f\|_{\avgL^r(Q)}\|g\|_{\avgL^s(Q)}
    \leq \eta^{-1}\int_{\R^d}\mc M_{r,s}(f,g)
    \lesssim_{d,r,s}\eta^{-1} \sum_{Q\in\mc S_1}|Q|\,\|f\|_{\avgL^r(Q)}\|g\|_{\avgL^s(Q)}.
\]
Applying Lemma \ref{Lemma:OneDyadicLattice} to $\mc S_1$ gives a $\frac{1}{2\cdot6^d}$-sparse family $\mc S_2\subset\mc D$ such that
\begin{equation}\label{Eq:GlobalAsymmetricSparse}
    |\Lambda(f,g)|
    \lesssim_{d,r,s}\eta^{-1}C_{\mathrm{Sp}}
    \sum_{Q\in\mc S_2}|Q|\,\|f\|_{\avgL^r(3Q)}\|g\|_{\avgL^s(Q)}.
\end{equation}
The proof of Proposition \ref{Prop:GlobalToUniformLocal} applies verbatim to \eqref{Eq:GlobalAsymmetricSparse}, with $3P$ and $3Q$ in the $f$-averages. Thus, for every $Q\in\mc D$,  $f\in L^\infty(3Q)$ and $g\in L^\infty(Q)$, there is an  $\frac{1}{4\cdot6^d}$-sparse family $\mc S_Q\subset\mc D(Q)$, such that
\begin{equation}\label{Eq:UniformLocalAsymmetricSparse}
    |\Lambda(f,g)|
    \lesssim_{d,r,s}\eta^{-1}C_{\mathrm{Sp}}
    \sum_{P\in\mc S_Q}|P|\,\|f\|_{\avgL^r(3P)}\|g\|_{\avgL^s(P)}.
\end{equation}
As in  Section \ref{sec:dyadic}, the sparse bound also shows that $\Lambda$ extends uniquely to a continuous bilinear form $L^p(\R^d)\times L^{p'}(\R^d) \to \C$ for all $p \in (r,s')$. 

The remainder of the proof is a verbatim adaptation of the proof of Theorem \ref{Thm:UniLocSparseToConvexBody}, using Lemmas \ref{Lemma:AsymmetricStoppingTime} and \ref{Lemma:AsymmetricRandomization} in place of Lemmas \ref{lemma:multistoptime} and \ref{Lemma:KeyRandomization}, respectively. We replace every localization and average of $f$ over a cube $Q$ by the corresponding localization and average over $3Q$, and replace the bound $0\leq m_\varepsilon(\omega)\leq2$ by
$ 0\leq m_\varepsilon(\omega)\leq2^{2\cdot3^d}.$
The bound for $n_\varepsilon$ remains unchanged, and the fixed sparseness parameter $\frac{1}{4\cdot6^d}$ is absorbed into the dimensional implicit constant. 
\end{proof}
\begin{remark}
    Suppose that $\Lambda\colon L^\infty_c(\R^d) \times L^\infty_c(\R^d) \to \C$ is a sesquilinear form satisfying the sparse domination assumption of Theorem \ref{Thm:SparseToConvexBodynondyadic}. Then the bilinear form $\tilde\Lambda(f,g)=\Lambda(f,\overline{g})$ also satisfies the same assumption.
Therefore, applying Theorem \ref{Thm:SparseToConvexBodynondyadic} to the bilinear form yields
\[
    |\Lambda(\vec f,\vec g)|=|\tilde\Lambda(\vec f,\overline{\vec g}\,)|\lesssim_{d,r,s} n^{1+\frac{1}{r}+\frac{1}{s}}\eta^{-1}C_{\mathrm{Sp}}\sum_{Q\in\mathcal{S}}|Q|\llangle \vec f\rrangle_{\avgL^r(3Q)}\cdot\llangle \overline{\vec g}\rrangle_{\avgL^s(Q)}.
\]
A simple calculation shows that $\llangle \overline{\vec g}\rrangle_{\avgL^s(Q)}=\{\overline{a}\,\colon\,a\in\llangle \vec g\rrangle_{\avgL^s(Q)}\}
$, and hence the above estimate is the standard convex body domination for sesquilinear forms.
\end{remark}

\subsection*{Acknowledgements}
This project was inspired by a presentation of Sandra Pott \cite{Pott2025}, in which she explained the beautiful connection between Bloom-type and matrix-weighted estimates. This led us to conjecture that commutator sparse domination may imply convex body domination, for which we then established a proof. Afterwards, we were able to weaken the assumption from commutator sparse domination to standard sparse domination.

\subsection*{AI disclosure statement}
 GPT-5.6 Sol Pro, accessed through ChatGPT by OpenAI, was used to prototype proof strategies for this note. More precisely, in a long initial conversation with ChatGPT 5.6 we iteratively constructed a proof that sparse domination for commutators $[b,T]$ as in Corollary \ref{Cor:SparseToCommutatorSparseintro} implies convex body domination for $T$. In this proof the function $b$ played the role of the functions $m_\varepsilon$ and $n_\varepsilon$ of Lemma \ref{Lemma:AsymmetricRandomization}. After we suggested  replacing the role of the function $b$ in that proof by Rademacher random variables, a rough version of the proof of Theorem \ref{Thm:UniLocSparseToConvexBody} was found in collaboration with ChatGPT 5.6.  The proof in its final form, and its extension to the non-dyadic setting, was fully developed, verified, and written by the authors and subsequently checked for typos and errors using ChatGPT 5.6. The authors take full responsibility for all claims and arguments in the paper.

\bibliographystyle{abbrv}
\bibliography{literature}
\end{document}